\newtheorem{theorem}{Theorem}
\newtheorem{lemma}{Lemma}
\newtheorem{proposition}{Proposition}
\newenvironment{proof}[1][Proof]{\noindent\textbf{#1.} }{\ \rule{0.5em}{0.5em}}
\begin{document}

\title{Vector variational problem with knitting boundary conditions}
\author{Gra\c{c}a Carita, Vladimir V. Goncharov\thanks{%
CIMA, Universidade de \'{E}vora, Rua Rom\~{a}o Ramalho 59, 7000-671, \'{E}%
vora, Portugal; V. Goncharov also belongs to the Institute of System
Dynamics and Control Theory, RAS, ul. Lermontov 134, 664033, Irkutsk,
Russia; e-mails: gcarita@uevora.pt (G. Carita), goncha@uevora.pt (V.
Goncharov)} \and and Georgi V. Smirnov\thanks{%
Universidade do Minho; Braga, Portugal; e-mail: smirnov@uminho.pt}}
\date{}
\maketitle

\begin{abstract}
We consider a variational problem with a polyconvex integrand and
nonstandard boundary conditions that can be treated as minimization of the strain
energy during the suturing  process in the plastic surgery. Existence of
minimizers is proved as well as  necessary optimality conditions are 
discussed.

 {\bf Keywords}: Calculus of Variations, polyconvex integrand, coercivity
assumptions, trace operator, knitting boundary conditions.

{\bf Mathematical Subject Classification} (2000): 49J45, 74B20, 92C50
\end{abstract}

\thispagestyle{empty}

\bigskip

\section{Introduction}

\bigskip

Given an open bounded connected domain $\Omega \subset \mathbb{R}^{N}$ with
a sufficiently regular (locally Lipschitz) boundary, $\partial \Omega $, let
us consider the integral%
\begin{equation}
I\left( u\right) :=\int_{\Omega }W\left( \nabla u\left( x\right) \right) \,dx
\label{Functional}
\end{equation}%
to be minimized on a class of Sobolev functions $u:\Omega \rightarrow 
\mathbb{R}^{d}$ with a kind of boundary conditions to be described later.
All over the paper we assume the integrand $W:\mathbb{R}^{d\times
N}\rightarrow \mathbb{R\cup }\left\{ +\infty \right\} $ to be \emph{%
polyconvex}. This means that the representation%
\begin{equation*}
W\left( \xi \right) =g\left( \mathbb{T}\left( \xi \right) \right) 
,\;\; \xi \in \mathbb{R}^{d\times N},
\end{equation*}
holds for some convex function $g:\mathbb{R}^{\tau \left( d,N\right)
}\rightarrow \mathbb{R\cup }\left\{ +\infty \right\} $,%
\begin{equation*}
\tau( d,N) :=\underset{s=1}{\overset{d\wedge N}{\dsum }}
\varkappa(s) ,\;\; \varkappa(s) :=\dbinom{s%
}{d}\dbinom{s}{N}=\frac{d!N!}{(s!) ^{2}( d-s)!(
N-s) !},
\end{equation*}
where%
\begin{equation*}
\mathbb{T}\left(\xi \right) :=\left( \mathrm{Adj}_{1}\xi ,\mathrm{Adj}%
_{2}\xi ,\mathrm{Adj}_{3}\xi ,\dots ,\mathrm{Adj}_{d\wedge N}\xi \right) 
,\;\; \xi \in \mathbb{R}^{d\times N},
\end{equation*}%
and $\mathrm{Adj}_{k}\xi $ is the vector of all \emph{minors} of the matrix $%
\xi $ of order $k=1,2,\dots ,d\wedge N$, respectively. In particular, $%
\mathrm{Adj}_{1}\xi =\xi $ and $\mathrm{Adj}_{d}\xi =\det \,\xi $ whenever $%
d=N$.

It is known that, under strong coercivity assumptions on $W$ to assure weak convergence of the minors of gradients for the minimizing sequence, the functional $I$ attains its minimum on $\bar{u}\left( \cdot \right) +\mathbf{W%
}_{0}^{1,p}\left( \Omega ;\mathbb{R}^{d}\right) $, $p\geq 1$. We refer to the fundamental work by J. Ball \cite{B} motivated by problems coming from nonlinear elasticity and to \cite{ADM, MQY, My} for further improvements.

The lower semicontinuity for general polyconvex integrands with respect to the weak convergence in $W^{1,p}(\Omega;\mathbb{R}^{d}),$       $\Omega\subset \mathbb{R}^{N},$ has been the subject of many investigations.
Namely, Marcellini showed in \cite{M} that this property holds whenever $p<N$. Later, this result was improved by Dacorogna and Marcellini in \cite{DM} who proved the lower semicontinuity for $p>N-1$ while Mal\'{y} in \cite{My} exhibited a counterexample for $p<N-1$. The limit case $p=N-1$ was adressed in \cite{ADM, DMS, CDM, FH}. Very recently (see \cite{FFLMMV} and \cite{DPMF})  the limit case was studied for polyconvex integrands depending on $x$ and/or on $u$.

Besides the Dirichlet boundary condition $u=%
\bar{u}$ for the displacement one considered, for instance, boundary condition on traction, which somehow depends on the normal
derivatives of $u\left( \cdot \right) $ (generalizing the Neumann boundary
data). Observe that these conditions (displacement, traction or a mixed one)
can be applied either to the whole boundary $\partial \Omega $, or to some
of its subsets of positive Hausdorff measure leaving the rest free.
Moreover, some restrictions on the \emph{jacobian} may be relevant and
practically justified. For example, the constraint $\det \,\nabla u\left(
x\right) >0$ means that the minimum is searched among the deformations
preserving orientation, while $\det \,\nabla u\left( x\right) =1$ refers to
the case of incompressible elastic body.

One of the possible applications of the above variational problem is
regarded to plastic surgery, namely, in the woman breast reduction, where we
deal with a sort of very elastic and soft tissue. Some recent publications
(see, e.g., \cite{Ay, D, S1, PY, S2}) were devoted to mathematical setting
of the related problems and to their numerical simulations. Medical
examinations allow to consider the involved tissue as a neo-Hookean
compressible material (see \cite{Z}). We have a more precise model when the strain energy is  defined
by the integral (\ref{Functional}) with the density $W:\mathbb{R}^{3\times
3}\rightarrow \mathbb{R}$,%
\begin{eqnarray}
&&W\left( \xi \right) :=\mu \left( \limfunc{tr}\left( \xi \cdot \xi
^{T}\right) -3-2\ln \left( \det \xi \right) \right)   \notag \\
&&+\lambda \left( \det \xi -1\right) ^{2}+\beta \limfunc{tr}\mathrm{\,}%
\left( \mathrm{Adj\,}\xi \cdot \mathrm{Adj\,}\xi ^{T}\right) \,,
\label{Int_ex}
\end{eqnarray}%
where "$\limfunc{tr}$" means the trace of a matrix, $\mathrm{Adj\,}\xi :=%
\mathrm{Adj}_{2}\xi $, and the symbol "$T$" stands for the matrix
transposition.  One of the steps of the
(breast reduction) surgery is the suturing, which mathematically can
be seen as an identification of points of some surface piece $\Gamma
^{+}\subset \partial \Omega $ with points of another one $\Gamma ^{-}\subset
\partial \Omega $. Denoting the respective correspondence between the points
of $\Gamma ^{+}$ and $\Gamma ^{-}$ by $\sigma $, we are led to a new type of constraint%
\begin{equation}
u(x) =(u\circ \sigma)(x), \;x\in
\Gamma ^{+},  \label{sutur_cond}
\end{equation}%
called the \emph{knitting boundary condition}. Let us note that the one-to-one
mapping $\sigma $ is not \emph{a priori} given and should be chosen to
guarantee the minimum value to the functional (\ref{Functional}). In other
words, a minimizer of (\ref{Functional}) (if any) should be a pair $\left(
u,\sigma \right) $ where $u\in \mathbf{W}^{1,p}\left( \Omega ;\mathbb{R}%
^{3}\right) $, $p\geq 1$, and $\sigma :\Gamma ^{+}\rightarrow \Gamma ^{-}$
is sufficiently regular. We set the natural hypothesis that $\sigma $ and
its inverse $\sigma ^{-1}$ are Lipschitz transformations (with the same
Lipschitz constant $L>0$). Practically this means that the sutured tissue
can not be extended nor compressed too much.

Motivated by the problem coming from the plastic surgery we will consider
just the case $p=2$ and $d=N=3$, although the results remain
true in the case $p>2$ and arbitrary $d=N\geq 2$ as well.

The paper is organized as follows. In the next section we give the exact
setting of the variational problem together with the main hypotheses on the
integrand $W$. For simplicity of references we put here also some important
facts regarded with the Sobolev functions. In Section 3 we justify first the
well-posedness of the problem by showing that the composed function from the
knitting condition (\ref{sutur_cond}) belongs to the respective Lebesgue
class. Afterwards, we prove existence of a minimizer as an accumulation
point of an arbitrary minimizing sequence (the so called \emph{direct method}%
, see \cite{Dac}). The paper is concluded with a necessary optimality
condition for the given problem (see Section 4) allowing to construct
effective numerical algorithms, which can be successively applied in the
medical practice.

\bigskip

\section{\protect\bigskip Main hypotheses and auxiliary results}

In what follows we fix a nonempty open bounded and connected set $\Omega
\subset \mathbb{R}^{3}$ whose boundary $\partial \Omega $ is assumed to be
locally Lipschitz (see, e.g., \cite[p. 354]{L}). By the symbol $\mathcal{L}%
^{m}$ ($dx$) we denote the Lebesgue measure in the space $\mathbb{R}^{m}$, $%
m=2,3$, while $\mathcal{H}^{2}$ means the two-dimensional \emph{Hausdorff
measure} (see, e.g., \cite{F}).

Let us divide the surface $\partial \Omega $ into several parts $\Gamma _{i}$%
, $i=1,2,3,4$, in such a way that $\mathcal{H}^{2}\left( \Gamma _{i}\cap
\Gamma _{j}\right) =0$ for $i\neq j$. Moreover, we set $\Gamma _{4}:=\Gamma
^{+}\cup \Gamma ^{-}$ where $\Gamma ^{\pm }\subset \Gamma $ with $\mathcal{H}%
^{2}\left( \Gamma ^{\pm }\right) >0$ and $\mathcal{H}^{2}\left( \Gamma
^{+}\cap \Gamma ^{-}\right) =0$ are also given.

Suppose that $W:\mathbb{R}^{3\times 3}\rightarrow \mathbb{R}$ is a \emph{%
polyconvex function} satisfying the \emph{growth\ assumption}:%
\begin{equation}
W(\xi) \geq c_{0}+c_{1}\left\vert \xi \right\vert
^{2}+c_{2}\left\vert \mathrm{Adj\,}\xi \right\vert ^{2}+c_{3}\left(\det \xi
\right)^{2}, \;\; \xi \in \mathbb{R}^{3\times 3},
\label{growth_cond}
\end{equation}%
where $c_{0}\in \mathbb{R}$ and $c_{i}>0$, $i=1,2,3$, are some given
constants. Here and in what follows by $\left\vert \cdot \right\vert $ we
denote the norm of both a vector in $\mathbb{R}^{n}$ and a $3\times 3$%
-matrix.

Taking into account that $\limfunc{tr}\left( \xi \cdot \xi ^{T}\right)
=\left\vert \xi \right\vert ^{2}$ for each matrix $\xi \in \mathbb{R}%
^{3\times 3}$, we see that the integrand (\ref{Int_ex}) satisfies the above
properties. Indeed, it is convex as a function of $\mathbb{T}\left( \xi
\right) $ being represented as a sum of three terms, which are convex w.r.t. 
$\xi $, $\det \xi $ and $\mathrm{Adj\,}\xi $, respectively. Furthermore,%
\begin{equation*}
W(\xi) =-3\mu +\lambda +\mu \left\vert \xi \right\vert
^{2}+\beta \left\vert \mathrm{Adj\,}\xi \right\vert ^{2}+f( \det \xi), \;\xi \in \mathbb{R}^{3\times 3},
\end{equation*}%
where the function%
\begin{equation*}
f\left( t\right) :=\frac{\lambda }{2}t^{2}-2\lambda t-2\mu \ln t, \;t>0%
,
\end{equation*}%
is lower bounded by some (negative) constant.

Since on various pieces of the surface $\partial \Omega $ the boundary conditions
are structurally different (some part of $\partial \Omega $ can be left even
free), to set the problem we use the notion of the \emph{trace operator},
which associates to each $u\in $ $\mathbf{W}^{1,2}\left( \Omega ;\mathbb{R}%
^{3}\right) $ a function $\limfunc{Tr}u$ defined on the boundary, $\partial
\Omega,$ which can be interpreted as the "\emph{boundary values}" of $u$. We refer to \cite[pp. 465-474]{L}, where
the existence and uniqueness of the trace operator were proved for scalar Sobolev functions $u \in \mathbf{W}^{1,p}(\Omega),   p>1.$ For vector-valued functions $u: \Omega\rightarrow \mathbb{R}^{3}$ instead, we can argue componentwise. So, applying \cite[Theorem 15.23]{L}, we define the trace as the linear and bounded operator
$\limfunc{Tr}:\mathbf{W}^{1,2}\left( \Omega ;\mathbb{R}%
^{3}\right) \rightarrow \mathbf{L}^{2}(\partial \Omega ;\mathbb{R}%
^{3})$ satisfying the following properties:

\begin{enumerate}
\item $\limfunc{Tr}u\left( x\right) =u\left( x\right) $, $x\in \partial
\Omega $, whenever $u\in \mathbf{W}^{1,2}\left( \Omega ;\mathbb{R}%
^{3}\right) \cap \mathbf{C}\left( \overline{\Omega };\mathbb{R}^{3}\right) $;

\item for each $u\in \mathbf{W}^{1,2}(\Omega;\mathbb{R}^3)$
and any test function $\varphi \in \mathbf{C}^{1}(\overline{\Omega };\mathbb{R}^{3})$ the equalities%
\begin{equation*}
\int_{\Omega}u_{j}\frac{\partial \varphi_{j}}{\partial x_{i}}dx=-\int_{\Omega}\varphi_{j}\frac{\partial u_{j}}{\partial x_{i}}dx+\int_{\partial \Omega }\varphi_{j}
\limfunc{Tr}(u_j)\nu_i d\mathcal{H}^{2},%
\end{equation*}%
hold for each $ i, j=1,2,3$ where $\nu:=(\nu_1,\nu_2,\nu_3)^{T}$ means the unit outward normal to $%
\partial \Omega $.
\end{enumerate}
 
In addition to the properties above, observe that the trace operator gives a compact embedding into the space $\mathbf{L}^{2}\left( \partial \Omega ;\mathbb{R}^{3}\right)$ that will be crucial to obtain the main result in Section 3. Namely, the following proposition takes place.

\begin{proposition}
\label{traceconv}Let $\Omega \subset \mathbb{R}^{3}$ be an open bounded set
with locally Lipschitz boundary. Then for each $\left\{ u_{n}\right\}
\subset \mathbf{W}^{1,2}\left( \Omega ;\mathbb{R}^{3}\right) $ converging to 
$u$ weakly in $\mathbf{W}^{1,2}\left( \Omega ;\mathbb{R}^{3}\right) $ the
sequence of traces $\left\{ \limfunc{Tr}u_{n}\right\} $ converges to $%
\limfunc{Tr}u$ strongly in $\mathbf{L}^{2}\left( \partial \Omega ;\mathbb{R}%
^{3}\right)$.
\end{proposition}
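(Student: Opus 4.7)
My plan is to combine three ingredients: the continuity of $\limfunc{Tr}$ provided by \cite[Theorem 15.23]{L}, an elementary boundary trace inequality, and the Rellich--Kondrachov compact embedding $\mathbf{W}^{1,2}(\Omega;\mathbb{R}^{3})\hookrightarrow\mathbf{L}^{2}(\Omega;\mathbb{R}^{3})$. By linearity of $\limfunc{Tr}$, it is enough to set $v_{n}:=u_{n}-u$, noting that $v_{n}\rightharpoonup 0$ in $\mathbf{W}^{1,2}$, and to prove that $\limfunc{Tr}v_{n}\to 0$ strongly in $\mathbf{L}^{2}(\partial\Omega;\mathbb{R}^{3})$.

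The central technical step is the trace inequality
\begin{equation*}
\|\limfunc{Tr}v\|_{\mathbf{L}^{2}(\partial\Omega)}^{2} \leq C\bigl(\|v\|_{\mathbf{L}^{2}(\Omega)}^{2} + \|v\|_{\mathbf{L}^{2}(\Omega)}\|\nabla v\|_{\mathbf{L}^{2}(\Omega)}\bigr), \quad v\in\mathbf{W}^{1,2}(\Omega;\mathbb{R}^{3}).
\end{equation*}
To prove it I would use a finite Lipschitz atlas of $\partial\Omega$ with a subordinate partition of unity to reduce the problem to the case where the boundary piece is the planar set $\{x_{3}=0\}$ bordering a half-ball $B^{+}\subset\{x_{3}>0\}$. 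For smooth $v$, integrating the identity $\partial_{3}(\eta|v|^{2})=\eta'|v|^{2}+2\eta\,v\cdot\partial_{3}v$ over $B^{+}$ with a cut-off $\eta$ equal to $1$ at $x_{3}=0$ and vanishing inside $B^{+}$, followed by Cauchy--Schwarz, yields the inequality on the flat piece. A density argument based on property 1 of $\limfunc{Tr}$ extends the estimate from $\mathbf{C}^{1}(\overline{\Omega};\mathbb{R}^{3})$ to the whole of $\mathbf{W}^{1,2}$.

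Applied to $v_{n}-v_{m}$, this gives
\begin{equation*}
\|\limfunc{Tr}(v_{n}-v_{m})\|_{\mathbf{L}^{2}(\partial\Omega)}^{2} \leq C\bigl(\|v_{n}-v_{m}\|_{\mathbf{L}^{2}(\Omega)}^{2} + 2M\|v_{n}-v_{m}\|_{\mathbf{L}^{2}(\Omega)}\bigr),
\end{equation*}
where $M:=\sup_{n}\|v_{n}\|_{\mathbf{W}^{1,2}}<\infty$ since weakly convergent sequences are norm-bounded. Rellich--Kondrachov forces $v_{n}\to 0$ strongly in $\mathbf{L}^{2}(\Omega;\mathbb{R}^{3})$, so the right-hand side vanishes as $n,m\to\infty$ and $\{\limfunc{Tr}v_{n}\}$ is Cauchy, hence strongly convergent, in $\mathbf{L}^{2}(\partial\Omega;\mathbb{R}^{3})$. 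The limit must be $0$: because $\limfunc{Tr}$ is linear and bounded it is weakly continuous, so $\limfunc{Tr}v_{n}\rightharpoonup 0$ in $\mathbf{L}^{2}(\partial\Omega;\mathbb{R}^{3})$, and strong and weak limits coincide.

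The main difficulty is localized in the first step: the bi-Lipschitz flattening of $\partial\Omega$, the choice of cut-offs compatible with the Lipschitz graph parametrization, and the $\mathbf{C}^{1}$-to-$\mathbf{W}^{1,2}$ density extension all require care. Once the trace inequality is secured, the Cauchy-sequence argument coupled with bulk Rellich compactness closes the proof without further effort.
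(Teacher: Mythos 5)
Your proposal is correct and follows essentially the same route as the paper: a localized boundary trace estimate obtained by integrating in the direction transversal to the Lipschitz graph patches (your interpolation inequality $\Vert \limfunc{Tr}v\Vert _{\mathbf{L}^{2}(\partial \Omega )}^{2}\leq C\bigl(\Vert v\Vert _{\mathbf{L}^{2}(\Omega )}^{2}+\Vert v\Vert _{\mathbf{L}^{2}(\Omega )}\Vert \nabla v\Vert _{\mathbf{L}^{2}(\Omega )}\bigr)$ is equivalent, by optimizing the parameter, to the paper's Lemma \ref{Lemmatrace} estimate $C\bigl(\varepsilon ^{-1}\Vert v\Vert _{\mathbf{L}^{2}(\Omega )}^{2}+\varepsilon \Vert \nabla v\Vert _{\mathbf{L}^{2}(\Omega )}^{2}\bigr)$), combined with the Rellich--Kondrachov theorem and the uniform bound on $\{\nabla u_{n}\}$ coming from weak convergence. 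The only cosmetic differences are your partition-of-unity/flattening presentation of the local estimate (the paper works directly on the graphs via Newton--Leibniz over a collar of height $\varepsilon$) and your Cauchy-sequence plus weak-limit identification step, where the paper simply applies its estimate to $u_{n}-u$, takes $\limsup_{n}$, and lets $\varepsilon \rightarrow 0^{+}$.
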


The proof is based essentially on the following lemma giving a nice estimate
for the surface integral of the trace operator.

\begin{lemma}
\label{Lemmatrace}Let $\Omega \subset \mathbb{R}^{3}$ be as in Proposition \ref{traceconv}. Then there exists a constant
$C>0$ such that 
\begin{equation}
\int_{\partial \Omega }\left\vert \limfunc{Tr}u\right\vert ^{2}d\mathcal{H}%
^{2}\leq C\left( \frac{1}{\varepsilon }\int_{\Omega }\left\vert u\right\vert
^{2}dx+\varepsilon \int_{\Omega }\left\vert \nabla u\right\vert
^{2}dx\right)   \label{Estimate_tr}
\end{equation}%
for any $\varepsilon >0$ and any $u\in \mathbf{W}^{1,2}\left( \Omega ;\mathbb{R}^{3}\right) .$
\end{lemma}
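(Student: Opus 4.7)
The plan is first to reduce to the scalar case: since $\left\vert \limfunc{Tr}u\right\vert ^{2}=\sum _{j=1}^{3}\left( \limfunc{Tr}u_{j}\right) ^{2}$, it suffices to establish (\ref{Estimate_tr}) for scalar $u\in \mathbf{W}^{1,2}\left( \Omega \right) $. The main geometric tool will be a \emph{transversal vector field}, that is, a map $\Phi \in \mathbf{C}^{1}\left( \overline{\Omega };\mathbb{R}^{3}\right) $ satisfying $\Phi \cdot \nu \geq 1$ for $\mathcal{H}^{2}$-almost every point of $\partial \Omega $, with $\left\vert \Phi \right\vert $ and $\left\vert \limfunc{div}\Phi \right\vert $ uniformly bounded by some constant $M>0$. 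The existence of such a $\Phi $ is classical for locally Lipschitz boundaries; one assembles it from a finite partition of unity subordinate to a cover of $\partial \Omega $ by Lipschitz charts, taking in each chart the inward coordinate direction after straightening and mollifying the result.

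Once $\Phi $ is available, the idea is to apply the Gauss--Green formula from property 2 of the trace operator to the test field $u^{2}\Phi $. Strictly speaking, property 2 is stated for $\varphi \in \mathbf{C}^{1}\left( \overline{\Omega };\mathbb{R}^{3}\right) $; one therefore first proves the identity for $u\in \mathbf{C}^{\infty }\left( \overline{\Omega }\right) $ in the classical form
\begin{equation*}
\int_{\partial \Omega }u^{2}\left( \Phi \cdot \nu \right) d\mathcal{H}^{2}=\int_{\Omega }\left( 2u\,\nabla u\cdot \Phi +u^{2}\limfunc{div}\Phi \right) dx,
\end{equation*}
and then extends it to $u\in \mathbf{W}^{1,2}\left( \Omega \right) $ by density (smooth functions are dense in $\mathbf{W}^{1,2}$ on Lipschitz domains and the trace is $\mathbf{L}^{2}$-continuous). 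Dropping the factor $\Phi \cdot \nu \geq 1$ on the left and bounding the mixed term on the right by Young's inequality $2\left\vert u\right\vert \left\vert \nabla u\cdot \Phi \right\vert \leq \varepsilon ^{-1}\left\vert \Phi \right\vert ^{2}u^{2}+\varepsilon \left\vert \nabla u\right\vert ^{2}$ produces
\begin{equation*}
\int_{\partial \Omega }\left( \limfunc{Tr}u\right) ^{2}d\mathcal{H}^{2}\leq \int_{\Omega }\left( \frac{\left\vert \Phi \right\vert ^{2}}{\varepsilon }+\left\vert \limfunc{div}\Phi \right\vert \right) u^{2}dx+\varepsilon \int_{\Omega }\left\vert \nabla u\right\vert ^{2}dx.
\end{equation*}
For $\varepsilon \in \left( 0,1\right] $ the coefficient in front of $u^{2}$ is bounded by $2M^{2}/\varepsilon $, which immediately yields (\ref{Estimate_tr}); larger values of $\varepsilon $ are handled, at the expense of enlarging $C$, by reduction to the case $\varepsilon =1$.

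The hardest step is the construction of the $\mathbf{C}^{1}$ transversal field $\Phi $ in the merely Lipschitz setting, where $\nu $ is only measurable and may be discontinuous across the seams of adjacent charts. This can be circumvented by a purely local argument: in each Lipschitz chart flatten $\partial \Omega $ to a graph, use the fundamental theorem of calculus $u\left( x^{\prime },0\right) =u\left( x^{\prime },t\right) -\int_{0}^{t}\partial _{3}u\left( x^{\prime },s\right) ds$, square, apply the Cauchy--Schwarz inequality, and average over $t\in \left( 0,\varepsilon \right) $ to obtain the $\varepsilon ^{-1}$ and $\varepsilon $ weights automatically, then glue the local bounds via a partition of unity to recover (\ref{Estimate_tr}).
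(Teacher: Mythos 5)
Your argument is correct in substance but takes a genuinely different route from the paper. The paper works entirely locally: it covers $\partial \Omega$ by finitely many Lipschitz graph charts, applies the Newton--Leibniz formula in the vertical variable, squares, uses the Cauchy--Schwarz inequality, integrates over a collar of height $\varepsilon$ above each graph, sums over the (finitely many) charts, and finishes for general $u\in \mathbf{W}^{1,2}$ by density; your final ``circumvention'' paragraph is essentially this very proof. Your primary route is instead the classical global argument (Ne\v{c}as, Grisvard): construct a bounded $\mathbf{C}^{1}$ field $\Phi$ transversal to $\partial \Omega$ with $\Phi \cdot \nu \geq 1$ a.e., apply the divergence theorem to $u^{2}\Phi$ for smooth $u$, split the mixed term by Young's inequality with parameter $\varepsilon$, and pass to $\mathbf{W}^{1,2}$ by density using the $\mathbf{L}^{2}(\partial \Omega)$-continuity of the trace. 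This is sound: the construction of $\Phi$ via a partition of unity subordinate to the graph charts is standard (take in each chart the constant vertical direction forming a positive inner product with $\nu$ --- note that with $\Omega$ lying above the graph this is the \emph{outward}-pointing choice, so your ``inward coordinate direction'' needs a sign flip, and no mollification is needed if the cut-off functions are smooth), and the density passage is unproblematic since the qualitative boundedness of the trace operator is quoted from the literature in the paper as well. Your route buys a short, chart-free computation once $\Phi$ exists and makes transparent where the $\varepsilon$-splitting comes from; the paper's route buys complete self-containedness, using nothing beyond the local graph description.

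One claim, however, is wrong: that larger values of $\varepsilon$ are handled ``by reduction to the case $\varepsilon =1$'' at the expense of enlarging $C$. No uniform constant can serve all large $\varepsilon$: for $u$ a nonzero constant the left-hand side of (\ref{Estimate_tr}) is a fixed positive number, while the right-hand side tends to $0$ as $\varepsilon \rightarrow \infty$. Thus the inequality with a single $C$ holds only on ranges $\varepsilon \in (0,\varepsilon _{0}]$; your main estimate delivers exactly this (with $C$ depending on $\varepsilon _{0}$ through $M$), and this is also all that the paper's own proof establishes, since it silently restricts to $\varepsilon <\min_{i}\delta _{i}$. The restriction is harmless for the only application (Proposition \ref{traceconv}, where $\varepsilon \rightarrow 0^{+}$), but you should formulate your conclusion for small $\varepsilon$ rather than claim the full range.
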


\begin{proof}
Given $x\in \partial \Omega ,$ due to the Lipschitz hypothesis there exists
a neighborhood $U_{x}$ of $x$ such that $U_{x}\cap \partial \Omega $ can be
represented as the graph of a Lipschitz function w.r.t. some (local)
coordinates. Without loss of generality, we may assume that 
\begin{equation*}
U_{x}\cap \Omega =\left\{ \left( y^{\prime },y_{3}\right) :f_{x}\left(
y^{\prime }\right) <y_{3}\leq f_{x}\left( y^{\prime }\right) +\delta
_{x},~y^{\prime }\in G_{x}\right\} 
\end{equation*}%
and%
\begin{equation*}
U_{x}\cap \partial \Omega =\left\{ \left( y^{\prime },f_{x}\left( y^{\prime
}\right) \right) :y^{\prime }\in G_{x}\right\} ,
\end{equation*}%
where $\delta _{x}>0,$ $G_{x}\subset \mathbb{R}^{2}$ is an open set in the
space of the first two coordinates and $f_{x}:G_{x}\rightarrow \mathbb{R}$
is a Lipschitz function. By compactness there exists a finite number of
points $x^{i}\in \partial \Omega \,,~i=1,\dots ,q,$ such that 
\begin{equation*}
\partial \Omega =\tbigcup_{i=1}^{q}\left( U_{x^{i}}\cap \partial \Omega
\right) .
\end{equation*}%
Set $\delta _{i}:=\delta _{x^{i}},~U_{i}:=U_{x^{i}},~G_{i}:=G_{x^{i}}$ and~ $%
f_{i}:=f_{x^{i}}$, $i=1,\dots ,q$. Denote by $L>0$ the biggest Lipschitz
constant of the functions $f_{i}.$

Let us choose $\varepsilon <\min \left\{ \delta _{i}:i=1,\dots ,q\right\} $
and consider first the function $u\in \mathbf{C}^{1}\left( \overline{\Omega }%
\right) \cap \mathbf{W}^{1,2}\left( \Omega ;\mathbb{R}^{3}\right) .$ Given $%
i\in \left\{ 1,\dots ,q\right\} ,$ by the Newton-Leibniz formula, for each $%
x^{\prime }\in G_{i}$ and $\ x_{3}\in \mathbb{R}$ with $f_{i}\left(
x^{\prime }\right) \leq x_{3}<f_{i}\left( x^{\prime }\right) +\varepsilon $
we have 
\begin{equation*}
u\left( x^{\prime },f_{i}\left( x^{\prime }\right) \right) =u\left(
x^{\prime },x_{3}\right) -\int_{f_{i}\left( x^{\prime }\right) }^{x_{3}}%
\frac{\partial u}{\partial x_{3}}\left( x^{\prime },s\right) ~ds.
\end{equation*}%
In turn, by Cauchy-Schwartz inequality,%
\begin{eqnarray*}
\left\vert u\left( x^{\prime },f_{i}\left( x^{\prime }\right) \right)
\right\vert ^{2} &\leq &2\left( \left\vert u\left( x^{\prime },x_{3}\right)
\right\vert ^{2}+\left( \int_{f_{i}\left( x^{\prime }\right) }^{f_{i}\left(
x^{\prime }\right) +\varepsilon }\left\vert \frac{\partial u}{\partial x_{3}}%
\left( x^{\prime },s\right) \right\vert ds\right) ^{2}\right)  \\
&\leq &2\left( \left\vert u\left( x^{\prime },x_{3}\right) \right\vert
^{2}+\varepsilon \int_{f_{i}\left( x^{\prime }\right) }^{f_{i}\left(
x^{\prime }\right) +\varepsilon }\left\vert \frac{\partial u}{\partial x_{3}}%
\left( x^{\prime },s\right) \right\vert ^{2}ds\right) .
\end{eqnarray*}%
Hence,

\begin{eqnarray*}
&&\left\vert u\left( x^{\prime },f_{i}(x^{\prime })\right) \right\vert ^{2}\sqrt{%
1+\left\vert \nabla f_{i}\left( x^{\prime }\right) \right\vert ^{2}} \\
&\leq &2\sqrt{1+L^{2}}\left( \left\vert u\left( x^{\prime },x_{3}\right)
\right\vert ^{2}+\varepsilon \int_{f_{i}\left( x^{\prime }\right)
}^{f_{i}\left( x^{\prime }\right) +\varepsilon }\left\vert \frac{\partial u}{%
\partial x_{3}}\left( x^{\prime },s\right) \right\vert ^{2}ds\right) .
\end{eqnarray*}%
Integrating both parts of the previous inequality in the cylinder 
\begin{equation*}
\Omega _{i}:=\left\{ \left( x^{\prime },x_{3}\right) :f\left( x^{\prime
}\right) \leq x_{3}\leq f\left( x^{\prime }\right) +\varepsilon ,~x^{\prime
}\in G_{i}\right\} \subset U_{i}\cap \Omega ,
\end{equation*}%
gives%
\begin{eqnarray*}
\varepsilon \int_{U_{i}\cap \partial \Omega }\left\vert u\left( x\right)
\right\vert ^{2}d\mathcal{H}^{2}\left( x\right)  &\leq &2\sqrt{1+L^{2}}%
\left( \int_{\Omega _{i}}\left\vert u\left( x\right) \right\vert
^{2}dx+\varepsilon ^{2}\int_{\Omega _{i}}\left\vert \frac{\partial u}{%
\partial x_{3}}\left( x\right) \right\vert ^{2}dx\right)  \\
&\leq &2\sqrt{1+L^{2}}\left( \int_{\Omega }\left\vert u\left( x\right)
\right\vert ^{2}dx+\varepsilon ^{2}\int_{\Omega }\left\vert \frac{\partial u%
}{\partial x_{3}}\left( x\right) \right\vert ^{2}dx\right) .
\end{eqnarray*}

Summing in $i=1,\dots ,q$ we have%
\begin{eqnarray}
&&\int_{\partial \Omega }\left\vert u\left( x\right) \right\vert ^{2}d%
\mathcal{H}^{2}\left( x\right)   \label{traceu} \\
&\leq &2q\sqrt{1+L^{2}}\left( \frac{1}{\varepsilon }\int_{\Omega }\left\vert
u\left( x\right) \right\vert ^{2}dx+\varepsilon \int_{\Omega }\left\vert 
\frac{\partial u}{\partial x_{3}}\left( x\right) \right\vert ^{2}dx\right) .
\notag
\end{eqnarray}%
The inequality (\ref{Estimate_tr}) for each $u \in \mathbf{W}^{1,2}(\Omega ;\mathbb{R}^{3})$ follows now from (\ref{traceu}) by the
properties of the trace operator and by the density of smooth functions.
\end{proof}

\medskip

\begin{proof}[Proof of Proposition 1]
Given a sequence $\left\{ u_{n}\right\} \subset \mathbf{W}^{1,2}\left(
\Omega ;\mathbb{R}^{3}\right) $ converging weakly to $u$ in $\mathbf{W}%
^{1,2}\left( \Omega ;\mathbb{R}^{3}\right) $, there exists $M>0$ with 
\begin{equation*}
\int_{\Omega }\left\vert \nabla u_{n}\left( x\right) \right\vert ^{2}dx\leq M
\end{equation*}%
for all $n\in \mathbb{N}.$ Then, by the \emph{Rellich-Kondrachov theorem}
(see \cite[Theorem 11.21, p. 326]{L}), $u_{n}\rightarrow u$ $\ $strongly$\ $%
in$~\mathbf{L}^{2}\left( \Omega ;\mathbb{R}^{3}\right) .$ Applying Lemma \ref%
{Lemmatrace} to $\limfunc{Tr}\left( u_{n}-u\right) =\limfunc{Tr}u_{n}-%
\limfunc{Tr}u,$ we have%
\begin{align*}
\int_{\partial \Omega }\vert \limfunc{Tr}u_{n}-\limfunc{Tr}
u\vert ^{2}d\mathcal{H}^{2}&\leq C(\frac{1}{\varepsilon }\int_{\Omega }\vert
u_{n}-u\vert ^{2}dx+\varepsilon \int_{\Omega }\vert \nabla
u_{n}-\nabla u\vert ^{2}dx)\\
&\leq C( \frac{1}{\varepsilon }\int_{\Omega }\vert
u_{n}-u\vert ^{2}dx+4\varepsilon M) .
\end{align*}
Hence%
\begin{equation*}
\underset{n\rightarrow \infty }{\lim \sup }\int_{\partial \Omega }\left\vert 
\limfunc{Tr}u_{n}-\limfunc{Tr}u\right\vert ^{2}d\mathcal{H}^{2}\leq
4\varepsilon CM.
\end{equation*}%
Letting $\varepsilon \rightarrow 0^{+}$ concludes the proof.\medskip 
\end{proof}

We will use also the so called \emph{generalized
Poincar\'{e} inequality} (see \cite[Theorem 6.1-8, p. 281]{C}).

\begin{proposition}
\label{poincare}Given an open bounded domain $\Omega \subset \mathbb{R}^{3}$
with locally Lipschitz boundary and a measurable subset $\Gamma \subset
\partial \Omega $ with $\mathcal{H}^{2}\left( \Gamma \right) >0$ there
exists a constant $C>0$ such that 
\begin{equation}
\underset{\Omega }{\dint }\left\vert u\left( x\right) \right\vert ^{2}dx\leq
C\left[ \underset{\Omega }{\dint }\left\vert \nabla u\left( x\right)
\right\vert ^{2}\,dx+\left\vert \underset{\Gamma }{\dint }\limfunc{Tr}%
u\left( x\right) \,d\mathcal{H}^{2}\left( x\right) \right\vert ^{2}\right] 
\label{poincare_ineq}
\end{equation}%
for each $u\in \mathbf{W}^{1,2}\left( \Omega ;\mathbb{R}^{3}\right) $.
\end{proposition}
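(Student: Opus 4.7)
The plan is to argue by contradiction in the classical compactness style, using exactly the two tools already at our disposal: the Rellich--Kondrachov theorem in $\Omega$ and the compactness of the trace operator established in Proposition~\ref{traceconv}.

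Suppose the inequality fails. Then for every $n\in\mathbb{N}$ one can find $u_n\in\mathbf{W}^{1,2}(\Omega;\mathbb{R}^3)$ such that
\begin{equation*}
\int_\Omega |u_n|^2\,dx \;>\; n\left[\int_\Omega |\nabla u_n|^2\,dx + \left|\int_\Gamma \limfunc{Tr}u_n\,d\mathcal{H}^2\right|^2\right].
\end{equation*}
After normalizing by $\|u_n\|_{\mathbf{L}^2(\Omega)}$, I may assume $\|u_n\|_{\mathbf{L}^2(\Omega)}=1$ for all $n$, so that
\begin{equation*}
\int_\Omega |\nabla u_n|^2\,dx \;\longrightarrow\; 0 \qquad\text{and}\qquad \int_\Gamma \limfunc{Tr}u_n\,d\mathcal{H}^2 \;\longrightarrow\; 0.
\end{equation*}
In particular $\{u_n\}$ is bounded in $\mathbf{W}^{1,2}(\Omega;\mathbb{R}^3)$, so a subsequence (not relabelled) converges weakly in $\mathbf{W}^{1,2}$ to some limit $u$.

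Next I would extract the properties of $u$. By Rellich--Kondrachov, $u_n\to u$ strongly in $\mathbf{L}^2(\Omega;\mathbb{R}^3)$, which preserves the norm and yields $\|u\|_{\mathbf{L}^2(\Omega)}=1$. On the other hand, $\nabla u_n\to 0$ strongly in $\mathbf{L}^2$; uniqueness of the weak limit in $\mathbf{L}^2$ forces $\nabla u=0$ a.e.\ in $\Omega$. Since $\Omega$ is connected, $u$ is a (vector) constant, necessarily nonzero. Now I invoke Proposition~\ref{traceconv}: the trace sequence $\limfunc{Tr}u_n$ converges strongly to $\limfunc{Tr}u$ in $\mathbf{L}^2(\partial\Omega;\mathbb{R}^3)$. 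Passing to the limit in the surface integral gives
\begin{equation*}
\int_\Gamma \limfunc{Tr}u\,d\mathcal{H}^2 \;=\; \lim_{n\to\infty}\int_\Gamma \limfunc{Tr}u_n\,d\mathcal{H}^2 \;=\; 0.
\end{equation*}

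Because $u$ is a constant on $\Omega$, its trace is the same constant on $\partial\Omega$, so the previous identity reads $u\cdot\mathcal{H}^2(\Gamma)=0$. The hypothesis $\mathcal{H}^2(\Gamma)>0$ then forces $u=0$, which contradicts $\|u\|_{\mathbf{L}^2(\Omega)}=1$. The main technical point on which everything rests is the compactness of the trace, i.e.\ passing from weak convergence in $\mathbf{W}^{1,2}$ to convergence of the boundary integrals; this has already been handled by Proposition~\ref{traceconv}, so the remaining argument is essentially a short compactness/constant-function bookkeeping and presents no further obstacle.
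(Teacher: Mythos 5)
Your argument is correct, but it is not the route the paper takes: the paper offers no proof of Proposition~\ref{poincare} at all, simply invoking Ciarlet's generalized Poincar\'e inequality (Theorem 6.1-8 in \cite{C}). Your proof is the classical compactness/contradiction argument, and every ingredient you use is legitimately available at this point in the paper: the normalization is justified because both sides of (\ref{poincare_ineq}) are $2$-homogeneous in $u$; Rellich--Kondrachov (valid for the locally Lipschitz, hence extension, domain $\Omega$) gives the strong $\mathbf{L}^{2}(\Omega)$ convergence and preservation of the unit norm; weak convergence of gradients plus $\nabla u_{n}\rightarrow 0$ forces $\nabla u=0$, and connectedness of $\Omega$ (a standing assumption in Section 2, consistent with the word ``domain'') makes $u$ a constant; Proposition~\ref{traceconv} (proved before this statement, with no circularity) passes the boundary integrals to the limit, using only that $\mathcal{H}^{2}(\partial\Omega)<\infty$ so that strong $\mathbf{L}^{2}(\partial\Omega)$ convergence controls the integral over $\Gamma$; and property 1 of the trace operator identifies the trace of the constant $u$ with that constant, so $\mathcal{H}^{2}(\Gamma)>0$ yields $u=0$, contradicting $\Vert u\Vert_{\mathbf{L}^{2}(\Omega)}=1$. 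What your approach buys is a self-contained proof (indeed essentially the one behind the cited theorem), at the usual price of such compactness arguments: the constant $C$ is non-constructive, whereas the citation route leaves the quantitative structure of $C$ to the reference. One cosmetic remark: it is worth stating explicitly that $\mathcal{H}^{2}(\Gamma)\leq\mathcal{H}^{2}(\partial\Omega)<\infty$ and that the trace of a constant is that constant by property 1, since these are the two small facts that make the final step rigorous.
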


\medskip
Let us formulate now the boundary conditions in terms of the trace operator.
Consider first a surface $\mathcal{S}\subset \mathbb{R}^{3}$ defined by some
continuous function $h:\mathbb{R}^{3}\rightarrow \mathbb{R}$,%
\begin{equation*}
\mathcal{S}:=\left\{ u\in \mathbb{R}^{3}:h\left( u\right) =0\right\} .
\end{equation*}
Then, given $L\geq 1$ we denote by $\Sigma _{L}\left( \Gamma ^{+};\Gamma
^{-}\right) $ the set of all functions $\sigma :\Gamma ^{+}\rightarrow
\Gamma ^{-}$ satisfying the inequalities%
\begin{equation}
\frac{1}{L}\left\vert x-y\right\vert \leq \left\vert \sigma \left( x\right)
-\sigma \left( y\right) \right\vert \leq L\left\vert x-y\right\vert 
\label{Lipschitz}
\end{equation}%
for all $x,y\in \Gamma ^{+}$, and introduce the set $\mathcal{W}_{L}\subset 
\mathbf{W}^{1,2}\left( \Omega ;\mathbb{R}^{3}\right) \times \Sigma
_{L}\left( \Gamma ^{+};\Gamma ^{-}\right) $ of all pairs $\left( u,\sigma
\right) $ satisfying the (boundary) conditions:

\begin{enumerate}
\item[(C$_{1}$)] $\limfunc{Tr}u\left( x\right) =x$ for $\mathcal{H}^{2}$%
-a.e. $x\in \Gamma _{1}$;

\item[(C$_{2}$)] $h\left( \limfunc{Tr}u\left( x\right) \right) =0$ for $%
\mathcal{H}^{2}$-a.e. $x\in \Gamma _{2}$;

\item[(C$_{3}$)] $\limfunc{Tr}u\left( x\right) =\limfunc{Tr}u\left( \sigma
\left( x\right) \right) $ for $\mathcal{H}^{2}$-a.e. $x\in \Gamma ^{+}$.
\end{enumerate}
Thus, we can write the \emph{knitting variational problem} in the form%
\begin{equation}
\min \left\{ \int_{\Omega }W\left( \nabla u\left( x\right) \right)dx:\left( u,\sigma \right) \in \mathcal{W}_{L}\right\}.  \label{varproblem}
\end{equation}%

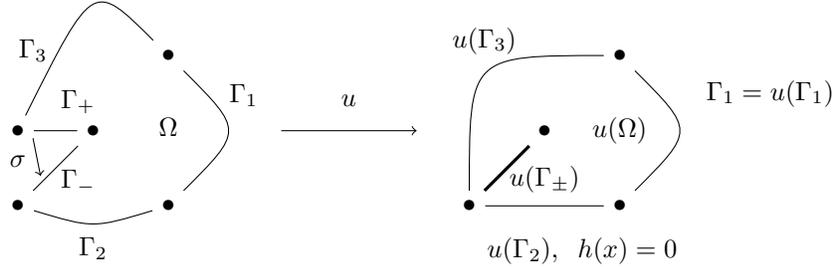
\begin{figure}[htb]	
\begin{center}
\begin{tikzpicture}
\node[label=below:~]  (x1) at (2,2)  {$\bullet$};
\node[label=below:~ ]  (x2) at (2,0)  {$\bullet$};
\node[label=below:~ ]  (x3) at (0,0)  {$\bullet$};
\node[label=below:~ ]  (x4) at (1,1)  {$\bullet$};
  \node[label=below:~ ]  (x5) at (0,1)  {$\bullet$};

  \draw (x1) .. controls (3,1)  .. (x2);
 \draw (x2) .. controls (1,-0.3)  .. (x3);
 \draw (x3) -- (x4);
 \draw (x4) -- (x5);
  \draw (x5) .. controls (1,3)  .. (x1);

\node[label=below:~]  (z1) at (8,2)  {$\bullet$};
\node[label=below:~ ]  (z2) at (8,0)  {$\bullet$};
\node[label=below:~ ]  (z3) at (6,0)  {$\bullet$};
\node[label=below:~ ]  (z4) at (7,1)  {$\bullet$};

  \draw (z1) .. controls (9,1)  .. (z2);
 \draw (z2) -- (z3);
 \draw[very thick] (z3) -- (z4);

  \draw (z3) .. controls (6,2)  .. (z1);  
  
\draw[->] (3.5,1) -- (5.3,1);  
  
\node [above] at (4.4,1.2) {$u$};

\node [above] at (3,1.2) {$\Gamma_1$};
\node [above] at (10,1.2) {$\Gamma_1=u(\Gamma_1)$};

\node [below] at (1,-0.3) {$\Gamma_2$};
\node [below] at (7.5,-0.3) {$u(\Gamma_2),\;\; h(x)=0$};

\node [above] at (0.2,1.8) {$\Gamma_3$};
\node [above] at (6.2,1.9) {$u(\Gamma_3)$};

\node [above] at (0.8,1.1) {$\Gamma_+$};
%\node [above] at (6.7,1) {$u(\Gamma_+)$};

\node [below] at (0.8,0.6) {$\Gamma_-$};
\node [below] at (7,0.65) {$u(\Gamma_{\pm})$};

\draw[->] (0.2,0.9) -- (0.3,0.4);  
\node [below] at (0,0.8) {$\sigma$};

\node [below] at (2,1.3) {$\Omega$};
\node [below] at (8,1.3) {$u ({\Omega})$};
  
\end{tikzpicture}
\end{center}
\caption{General scheme of plastic surgery.}
\end{figure}

Let us emphasize the difference between the boundary conditions (C$_{1}$)$-$%
(C$_{3}$) above (see Fig. 1). The first condition (C$_{1}$) means that the surface $%
\Gamma _{1}$ is a part of the elastic body (breast) that remains fixed. The condition (C$_{2}$),
instead, can be interpreted as the knitting of a part of the incised  breast 
(surface $\Gamma _{2}$) to the fixed surface $\mathcal{S}$ of the woman's
chest, while (C$_{3}$) means the knitting of the cut breast surface $\Gamma
_{4}:=\Gamma ^{+}\cup \Gamma ^{-},$ of $\Gamma ^{+}$ into $\Gamma ^{-}$.
Finally, the piece $\Gamma _{3}$ of the boundary $\partial \Omega $ remains
free and admits an arbitrary configuration depending on the knitting process.

\bigskip

\section{\protect\bigskip Existence of minimizers}

Before proving the main existence theorem let us justify that for each $%
\sigma \in \Sigma _{L}\left( \Gamma ^{+};\Gamma ^{-}\right) $ the boundary
condition (C$_{3}$) makes sense.

\begin{lemma}
\label{measurability_comp}Let $\Omega \subset \mathbb{R}^{3}$ be an open
bounded connected domain with locally Lipschitz boundary and $\Gamma ^{\pm
}\subset \partial \Omega $ be $\mathcal{H}^{2}$-measurable sets with $%
\mathcal{H}^{2}\left( \Gamma ^{\pm }\right) >0$ and $\mathcal{H}^{2}\left(
\Gamma ^{+}\cap \Gamma ^{-}\right) =0$. Then for each $u\in \mathbf{W}%
^{1,2}\left( \Omega ;\mathbb{R}^{3}\right) $ and $\sigma \in \Sigma
_{L}\left( \Gamma ^{+};\Gamma ^{-}\right) $, $L\geq 1$, the composed
function $\limfunc{Tr}u\circ \sigma :\Gamma ^{+}\rightarrow \mathbb{R}^{3}$
is measurable w.r.t. the measure $\mathcal{H}^{2}$ on $\Gamma ^{+}$.
\end{lemma}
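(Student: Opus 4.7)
My plan is to exploit the fact that the bi-Lipschitz map $\sigma $ preserves $\mathcal{H}^{2}$-null sets in both directions, so that $\limfunc{Tr}u\circ \sigma $ is a well-defined $\mathcal{H}^{2}$-measurable map on $\Gamma ^{+}$, independently of the pointwise representative chosen for $\limfunc{Tr}u$.

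The key ingredient would be the standard behavior of Hausdorff measure under Lipschitz mappings: whenever $\varphi $ is Lipschitz with constant $\Lambda $, one has $\mathcal{H}^{s}(\varphi (E))\leq \Lambda ^{s}\,\mathcal{H}^{s}(E)$. Applied to $\sigma ^{-1}:\Gamma ^{-}\rightarrow \Gamma ^{+}$, which is Lipschitz with constant $L$ by the first inequality in (\ref{Lipschitz}), this yields
\[\mathcal{H}^{2}\bigl(\sigma ^{-1}(N)\bigr)\leq L^{2}\,\mathcal{H}^{2}(N)\qquad \text{for every }N\subset \Gamma ^{-},\]
so that $\sigma $ pulls back $\mathcal{H}^{2}$-null subsets of $\Gamma ^{-}$ to $\mathcal{H}^{2}$-null subsets of $\Gamma ^{+}$.

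With this in hand, I would pick a Borel-measurable representative $v:\partial \Omega \rightarrow \mathbb{R}^{3}$ of the equivalence class $\limfunc{Tr}u\in \mathbf{L}^{2}(\partial \Omega ;\mathbb{R}^{3})$, which exists by classical measure-theoretic arguments. Since $\sigma $ is Lipschitz, hence continuous and Borel, the composition $v\circ \sigma :\Gamma ^{+}\rightarrow \mathbb{R}^{3}$ is Borel, and in particular $\mathcal{H}^{2}$-measurable. If $v'$ is another Borel representative of $\limfunc{Tr}u$, then $v$ and $v'$ coincide off some $\mathcal{H}^{2}$-null set $N\subset \Gamma ^{-}$, and by the preceding estimate $v\circ \sigma $ and $v'\circ \sigma $ differ only on the $\mathcal{H}^{2}$-null set $\sigma ^{-1}(N)$. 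Thus $\limfunc{Tr}u\circ \sigma $ is unambiguously defined as an $\mathcal{H}^{2}$-measurable function on $\Gamma ^{+}$, up to equality almost everywhere.

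The only delicate point in this strategy is the null-set preservation established in the second paragraph; this is exactly where the lower inequality in (\ref{Lipschitz}), i.e.\ the Lipschitz character of $\sigma ^{-1}$, is indispensable. A merely Lipschitz (not bi-Lipschitz) hypothesis on $\sigma $ would not suffice, since without control of $\sigma ^{-1}$ an $\mathcal{H}^{2}$-null subset of $\Gamma ^{-}$ could in principle have $\sigma $-preimage of positive measure, in which case even the interpretation of $\limfunc{Tr}u\circ \sigma $ as an equivalence class in $\mathbf{L}^{2}(\Gamma ^{+};\mathbb{R}^{3})$ would be ambiguous.
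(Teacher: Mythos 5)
Your proof is correct, and it rests on exactly the same pivotal observation as the paper's: the left inequality in (\ref{Lipschitz}) makes $\sigma^{-1}$ Lipschitz with constant $L$, hence $\sigma$ pulls $\mathcal{H}^{2}$-null subsets of $\Gamma^{-}$ back to $\mathcal{H}^{2}$-null subsets of $\Gamma^{+}$ (the paper proves this pullback estimate by hand from the covering definition of $\mathcal{H}^{2}$, whereas you invoke the standard bound $\mathcal{H}^{s}(\varphi(E))\leq \Lambda^{s}\mathcal{H}^{s}(E)$ for $\Lambda$-Lipschitz $\varphi$ — the same computation, packaged as a known fact). Where you genuinely diverge is in how measurability of the composition is then extracted. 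The paper approximates $u$ by continuous functions $v_{n}$ in $\mathbf{W}^{1,2}$, uses continuity of the trace operator to get $v_{n}\to \mathrm{Tr}\,u$ in $\mathbf{L}^{2}(\partial\Omega;\mathbb{R}^{3})$, passes to a subsequence converging $\mathcal{H}^{2}$-a.e.\ on $\Gamma^{-}$, and transports this convergence through $\sigma$ so that $\mathrm{Tr}\,u\circ\sigma$ is an a.e.\ limit of the continuous functions $v_{n}\circ\sigma$; this is self-contained and dovetails with how the trace is built. You instead select a Borel representative $v$ of $\mathrm{Tr}\,u$ and compose it with the continuous map $\sigma$, correctly exploiting that Borel-composed-with-continuous is Borel (a merely $\mathcal{H}^{2}$-measurable representative would not do here, and you implicitly use completeness of $\mathcal{H}^{2}$ to pass back to an arbitrary representative); your route is shorter modulo the two standard facts you cite, and it makes explicit a point the paper leaves tacit, namely that the composition is well defined as an a.e.\ equivalence class independently of the representative of the trace. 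Both arguments are sound; the paper's buys self-containedness, yours buys brevity and a cleaner statement of well-posedness.
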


\begin{proof}
Given $u\in \mathbf{W}^{1,2}\left( \Omega ;\mathbb{R}^{3}\right) $ by the
density argument there exists a sequence of continuous functions $v_{n}:%
\overline{\Omega }\rightarrow \mathbb{R}^{3}$ converging to $u$ in the space 
$\mathbf{W}^{1,2}\left( \Omega ;\mathbb{R}^{3}\right) $. Consequently (see the properties $1$ and $2$ of traces), $%
v_{n}=\limfunc{Tr}v_{n}\rightarrow \limfunc{Tr}u$, as $\ n\rightarrow \infty $,
in $\mathbf{L}^{2}\left( \partial \Omega ;\mathbb{R}^{3}\right) $. Then, up
to a subsequence, we have%
\begin{equation}
v_{n}\left( y\right) \rightarrow \limfunc{Tr}u\left( y\right) \;\;
\forall y\in \Gamma ^{-}\setminus E_{0}^{-}  \label{appr_cont_a.e.}
\end{equation}%
where $E_{0}^{-}\subset \Gamma ^{-}$ is some set with null Hausdorff
measure. So, it remains to prove that%
\begin{equation}
\mathcal{H}^{2}\left( \sigma ^{-1}\left( E_{0}^{-}\right) \right) =0
\label{measure_0}
\end{equation}%
$\,$because in such case we deduce from (\ref{appr_cont_a.e.}) that the
sequence of (continuous) functions $\left\{ v_{n}\left( \sigma \left(
x\right) \right) \right\} $ converges to $\limfunc{Tr}u\left( \sigma \left(
x\right) \right) $ for all $x\in \Gamma ^{+}$ up to the negligible set of
points $E_{0}^{+}:=\sigma ^{-1}\left( E_{0}^{-}\right) $.

On the other hand, (\ref{measure_0}) follows easily from the left inequality
in (\ref{Lipschitz}) and from the definition of \emph{Hausdorff measure}
(see \cite[p. 171]{F}):%
\begin{equation*}
\mathcal{H}^{2}\left( E\right) :=\frac{\pi }{4}\lim_{\varepsilon \rightarrow
0}\,\inf \,\sum_{i=1}^{\infty }\left( \limfunc{diam}A_{i}\right) ^{2}
\end{equation*}%
where the infimum is taken over all coverings $\left\{ A_{i}\right\}
_{i=1}^{\infty }$ of $E$ with the diameters $\limfunc{diam}A_{i}\leq
\varepsilon $. In fact, given $\eta >0$ we find $\varepsilon >0$ and a
family $\left\{ A_{i}\right\} _{i=1}^{\infty }$ with $\tbigcup_{i=1}^{\infty
}A_{i}\supset E_{0}^{-}$ , $\limfunc{diam}A_{i}\leq \varepsilon /L$ and%
\begin{equation*}
\sum_{i=1}^{\infty }\left( \limfunc{diam}A_{i}\right) ^{2}\leq \frac{\eta }{%
L^{2}}.
\end{equation*}%
Since due to (\ref{Lipschitz}) obviously $\limfunc{diam}\,\left( \sigma
^{-1}\left( A_{i}\right) \right) \leq L\limfunc{diam}\,A_{i}\leq \varepsilon 
$, $i=1,2,\dots $; 
\\the family $\left\{ \sigma ^{-1}\left( A_{i}\right) \right\}
_{i=1}^{\infty }$ is a covering of $E_{0}^{+}$ and%
\begin{equation*}
\sum_{i=1}^{\infty }\left( \limfunc{diam}\,\left( \sigma ^{-1}\left(
A_{i}\right) \right) \right) ^{2}\leq L^{2}\sum_{i=1}^{\infty }\left( 
\limfunc{diam}A_{i}\right) ^{2}\leq \eta ,
\end{equation*}%
we conclude that $\mathcal{H}^{2}\left( E_{0}^{+}\right) =0$, and the $%
\mathcal{H}^{2}$-measurability of $x\mapsto \limfunc{Tr}u\left( \sigma
\left( x\right) \right) $ on $\Gamma ^{+}$ follows.
\end{proof}

Let us give now an \emph{a priori} estimate for the "weighted" integral%
\begin{equation*}
\dint\limits_{\Gamma ^{+}}\left\vert \limfunc{Tr}u\left( \sigma \left(
x\right) \right) \right\vert ^{2}d\mathcal{H}^{2}\left( x\right) ,
\end{equation*}%
implying, in particular, that the composed function $\limfunc{Tr}u\circ
\sigma $ belongs to the class $\mathbf{L}^{2}\left( \partial \Omega ;\mathbb{%
R}^{3}\right) $.

\begin{lemma}
\label{estimate_w}Let $\Omega \subset \mathbb{R}^{3}$ and $\Gamma ^{\pm
}\subset \partial \Omega $ be such as in Lemma \ref{measurability_comp}. Then
given $L\geq 1$ there exists a constant $\mathfrak{L}_{L}>0$ such that the inequality%
\begin{equation}
\int_{\Gamma ^{+}}\left\vert \limfunc{Tr}u\left( \sigma \left( x\right)
\right) \right\vert ^{2}d\mathcal{H}^{2}\left( x\right) \leq \mathfrak{L}_{L}%
\int_{\Gamma ^{-}}\left\vert \limfunc{Tr}u\left( y\right) \right\vert ^{2}d%
\mathcal{H}^{2}\left( y\right)   \label{estimate_traces}
\end{equation}%
holds whenever $u\in \mathbf{W}^{1,2}\left( \Omega ;\mathbb{R}^{3}\right) $
and $\sigma \in \Sigma _{L}\left( \Gamma ^{+};\Gamma ^{-}\right) $.
\end{lemma}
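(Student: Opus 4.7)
The plan is to treat (\ref{estimate_traces}) as a change of variables on the surface, noting that the \emph{inverse} map $\sigma^{-1}:\Gamma^-\to\Gamma^+$ is Lipschitz with constant $L$ by the left inequality in (\ref{Lipschitz}). Measurability of the integrand $x\mapsto |\mathrm{Tr}\,u(\sigma(x))|^2$ on $\Gamma^+$ is already guaranteed by Lemma \ref{measurability_comp}, so the statement reduces to an estimate involving the push-forward of $\mathcal{H}^2|_{\Gamma^+}$ under $\sigma$.

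First, I would establish the Hausdorff-measure distortion bound
\begin{equation*}
\mathcal{H}^2\bigl(\sigma^{-1}(A)\bigr) \;\le\; L^2\,\mathcal{H}^2(A)\qquad \text{for every $\mathcal{H}^2$-measurable }A\subset\Gamma^-.
\end{equation*}
This is a direct rerun of the covering argument carried out at the end of the proof of Lemma \ref{measurability_comp}: given a covering $\{A_i\}$ of $A$ by sets of diameter at most $\varepsilon/L$, the family $\{\sigma^{-1}(A_i)\}$ covers $\sigma^{-1}(A)$ with $\mathrm{diam}\,\sigma^{-1}(A_i)\le L\,\mathrm{diam}\,A_i\le\varepsilon$, which yields $\sum_i(\mathrm{diam}\,\sigma^{-1}(A_i))^2\le L^2\sum_i(\mathrm{diam}\,A_i)^2$; passing to the infimum and to $\varepsilon\to 0^+$ in the definition of $\mathcal{H}^2$ gives the claim. (Measurability of $\sigma^{-1}(A)$ follows from the homeomorphism property of $\sigma$.)

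Second, I would promote this set-level inequality to the functional one by the standard layer-cake machinery: for any nonnegative $\mathcal{H}^2$-measurable $f$ on $\Gamma^-$, writing $\{f>t\}\subset\Gamma^-$ and $\{f\circ\sigma>t\}=\sigma^{-1}(\{f>t\})$, one has
\begin{equation*}
\int_{\Gamma^+} f(\sigma(x))\,d\mathcal{H}^2(x) = \int_0^\infty \mathcal{H}^2\bigl(\sigma^{-1}(\{f>t\})\bigr)\,dt \le L^2\int_0^\infty \mathcal{H}^2(\{f>t\})\,dt = L^2\int_{\Gamma^-} f(y)\,d\mathcal{H}^2(y).
\end{equation*}
(Equivalently, one can argue by approximation: the inequality is clear for characteristic functions, extends by linearity to nonnegative simple functions, and then to all nonnegative measurable $f$ by monotone convergence.) Applying this to $f(y):=|\mathrm{Tr}\,u(y)|^2$ yields (\ref{estimate_traces}) with $\mathfrak{L}_L = L^2$.

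There is no genuine obstacle here; the only point requiring a bit of care is the Hausdorff-measure distortion estimate, which has to be derived from the definition rather than from a Jacobian-type area formula (since $\sigma$ is only assumed bi-Lipschitz, not differentiable, and $\Gamma^\pm$ are merely pieces of a locally Lipschitz surface). Once that is in hand, the passage to $\mathbf{L}^2$-valued integrands is routine.
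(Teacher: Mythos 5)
Your proof is correct, and it follows a genuinely different and in fact slicker route than the paper. The paper proves (\ref{estimate_traces}) by working in local Lipschitz graph coordinates: it covers $\Gamma^{-}$ and $\Gamma^{+}$ by finitely many charts, writes $\sigma$ in these coordinates as planar bi-Lipschitz maps $\psi^{i}_{\sigma}$, invokes Rademacher's theorem and the change-of-variables (area) formula with a bound on $|\det\nabla(\psi^{i}_{\sigma})^{-1}|$, and sums over the charts; the resulting constant $\mathfrak{L}_{L}=6rM^{3}\sqrt{1+L_{\Gamma}^{2}}$ (with $M=L\sqrt{1+L_{\Gamma}^{2}}$) therefore depends on the number of charts and on the Lipschitz constant of the boundary, not only on $L$. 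You instead exploit the purely metric fact that the inverse $\sigma^{-1}$ is $L$-Lipschitz by the left inequality in (\ref{Lipschitz}), derive the distortion bound $\mathcal{H}^{2}(\sigma^{-1}(A))\leq L^{2}\mathcal{H}^{2}(A)$ directly from the covering definition of $\mathcal{H}^{2}$ (exactly the argument the paper uses in Lemma \ref{measurability_comp} for null sets, now for sets of arbitrary measure), and then pass from sets to integrals via the layer-cake formula. This avoids charts, Rademacher and the area formula altogether, needs no regularity of $\Gamma^{\pm}$ beyond $\mathcal{H}^{2}$-measurability, and yields the sharper, geometry-independent constant $\mathfrak{L}_{L}=L^{2}$. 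Two small remarks: the distortion bound holds as an outer-measure inequality for arbitrary sets $A\subset\Gamma^{-}$, so your parenthetical appeal to the ``homeomorphism property'' of $\sigma$ for measurability of $\sigma^{-1}(A)$ is not really needed (and, as stated, would only cover Borel sets); what the layer-cake step actually requires is measurability of the superlevel sets of $|\mathrm{Tr}\,u\circ\sigma|^{2}$ on $\Gamma^{+}$, which you correctly obtain from Lemma \ref{measurability_comp}, together with the fact that changing $\mathrm{Tr}\,u$ on an $\mathcal{H}^{2}$-null subset of $\Gamma^{-}$ only alters the composition on an $\mathcal{H}^{2}$-null subset of $\Gamma^{+}$, which is again the null-set distortion estimate.
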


\begin{proof}
To prove the estimate (\ref{estimate_traces}) we employ the local
lipschitzianity of the surfaces $\Gamma ^{\pm }$. Namely, given $y\in \Gamma
^{-}$ we choose an (open) ball $B\left( y,\varepsilon _{y}\right) $, $%
\varepsilon _{y}>0$, such that $\Gamma ^{-}\cap B\left( y,\varepsilon
_{y}\right) $ can be represented as the graph of a Lipschitz continuous
function with respect to some (local) coordinates. Without loss of
generality we can suppose that this function (say $f_{y}^{-}$) is defined on
an open set $D_{y}^{-}$ from the cartesian product of the first two
components $x^{\prime }:=\left( x_{1},x_{2}\right) $ and admits as values
the component $x_{3}$, i.e., 
\begin{equation}
\label{Gamma_-cap}
\Gamma ^{-}\cap B\left( y,\varepsilon _{y}\right) =\left\{ \left( z^{\prime
},f_{y}^{-}\left( z^{\prime }\right) \right) :z^{\prime }\in
D_{y}^{-}\right\}.
\end{equation}%
By the compactness of $\Gamma^{-}$ one can find a finite number of points $%
y^{1},\dots ,y^{q}\in \Gamma^{-}$ with 
\begin{equation}
\Gamma ^{-}=\Gamma ^{-}\cap \tbigcup\limits_{j=1}^{q}B\left( y^{j},\frac{%
\varepsilon_{_{y^{j}}}}{2}\right) .  \label{repres.Gamma_-}
\end{equation}%
Set $\varepsilon _{j}:=\varepsilon _{y^{j}}$, $D_{j}^{-}:=D_{y^{j}}^{-}$ and 
$f_{j}^{-}\left( z^{\prime }\right) :=f_{y^{j}}^{-}\left( z^{\prime }\right) 
$, $z^{\prime }\in D_{j}^{-}$, $j=1,\dots ,q$.

Similarly, for any $x\in \Gamma ^{+}$ there exist $\delta _{x}>0$, an open
domain $D_{x}^{+}\subset \mathbb{R}^{2}$ and a Lipschitz function $%
f_{x}^{+}:D_{x}^{+}\rightarrow \mathbb{R}$ such that 
\begin{equation*}
\Gamma ^{+}\cap B\left( x,\delta _{x}\right) =\left\{ \left( z^{\prime
},f_{x}^{+}\left( z^{\prime }\right) \right) :z^{\prime }\in
D_{x}^{+}\right\} .
\end{equation*}%
We do not loose generality assuming that the value of $f_{x}^{+}$ is the
last component of the vector $z\in \Gamma ^{+}$ (as in (\ref{Gamma_-cap}%
)). Again due to the compactness argument there exists a finite number of
points $x^{1},\dots ,x^{r}\in \Gamma ^{+}$ such that%
\begin{equation*}
\Gamma ^{+}=\Gamma ^{+}\cap \tbigcup\limits_{i=1}^{r}B\left( x^{i},\delta
_{i}\right) 
\end{equation*}%
where%
\begin{equation}
\delta _{i}:=\underset{1\leq j\leq q}{\min }\left( \delta _{x^{i}},\tfrac{%
\varepsilon _{j}}{2L}\right) ,\;\; i=1,\dots ,r.
\label{def_delta}
\end{equation}%
Set also $D_{i}^{+}:=D_{x^{i}}^{+}$ and $f_{i}^{+}\left( z^{\prime }\right)
:=f_{x^{i}}^{+}\left( z^{\prime }\right) $, $z^{\prime }\in D_{i}^{+}$, $%
i=1,\dots ,r$, and denote by $L_{\Gamma }$ the maximal Lipschitz constant of
the functions $f_{j}^{-}:D_{j}^{-}\rightarrow \mathbb{R}$, $j=1,\dots ,q$,
and $f_{i}^{+}:D_{i}^{+}\rightarrow \mathbb{R}$, $i=1,\dots ,r$.

We claim that given $i=1,\dots ,r$ and $\sigma \in \Sigma _{L}$, the image $%
\sigma \left( \Gamma ^{+}\cap B\left( x^{i},\delta _{i}\right) \right) $ is
contained in $\Gamma ^{-}\cap B\left( y^{j},\varepsilon _{j}\right) $ for
some $j=1,\dots ,q$. Indeed, let us choose $j$ such that $\sigma \left(
x^{i}\right) \in B\left( y^{j},\varepsilon _{j}/2\right) $ (see (\ref%
{repres.Gamma_-})). Taking an arbitrary $z\in \Gamma ^{+}\cap B\left(
x^{i},\delta _{i}\right) $ we, in particular, have $\left\vert
z-x^{i}\right\vert <\frac{\varepsilon _{j}}{2L}$ (see (\ref{def_delta})) and
by (\ref{Lipschitz}) $\left\vert \sigma \left( z\right) -\sigma \left(
x^{i}\right) \right\vert <\varepsilon _{j}/2$. However, assuming that $%
\sigma \left( z\right) \notin \Gamma ^{-}\cap B\left( y^{j},\varepsilon
_{j}\right) $ we have $\left\vert \sigma \left( z\right) -y^{j}\right\vert
\geq \varepsilon _{j}$ and hence%
\begin{equation*}
\left\vert \sigma \left( z\right) -\sigma \left( x^{i}\right) \right\vert
\geq \left\vert \sigma \left( z\right) -y^{j}\right\vert -\left\vert \sigma
\left( x^{i}\right) -y^{j}\right\vert \geq \varepsilon _{j}-\frac{%
\varepsilon _{_{j}}}{2}=\frac{\varepsilon _{_{j}}}{2},
\end{equation*}%
which is a contradiction. In what follows we associate to each $\sigma \in \Sigma _{L}$ and to each $i=1,\dots ,r$ an index $%
j=j\left( \sigma ,i\right) \in \left\{ 1,\dots ,q\right\} $ such that%
\begin{equation*}
\sigma \left( \Gamma ^{+}\cap B\left( x^{i},\delta _{i}\right) \right)
\subset \Gamma ^{-}\cap B\left( y^{j},\varepsilon _{j}\right) .
\end{equation*}

The latter inclusion allows us to define correctly the (injective) mapping $%
\psi _{\sigma }^{i}:D_{i}^{+}\rightarrow D_{j\left( \sigma ,i\right) }^{-}$
such that%
\begin{equation}
\sigma \left( x\right) =\sigma \left( x^{\prime },f_{i}^{+}\left( x^{\prime
}\right) \right) =\left( \psi _{\sigma }^{i}\left( x^{\prime }\right)
,f_{j}^{-}\left( \psi _{\sigma }^{i}\left( x^{\prime }\right) \right)
\right) ,x^{\prime }\in D_{i}^{+}.  \label{def_psi}
\end{equation}%
>From (\ref{Lipschitz}) it follows immediately that $\psi _{\sigma }^{i}$ is
Lipschitz:%
\begin{eqnarray}
\left\vert \psi _{\sigma }^{i}\left( x^{\prime }\right) -\psi _{\sigma
}^{i}\left( z^{\prime }\right) \right\vert  &\leq &\left\vert \sigma \left(
x^{\prime },f_{i}^{+}\left( x^{\prime }\right) \right) -\sigma \left(
z^{\prime },f_{i}^{+}\left( z^{\prime }\right) \right) \right\vert   \notag
\\
&\leq &L\left( \left\vert x^{\prime }-z^{\prime }\right\vert ^{2}+\left(
f_{i}^{+}\left( x^{\prime }\right) -f_{i}^{+}\left( z^{\prime }\right)
\right) ^{2}\right) ^{1/2}  \label{Lip_psi} \\
&\leq &L\sqrt{1+L_{\Gamma }^{2}}\left\vert x^{\prime }-z^{\prime
}\right\vert ,\;\; x^{\prime },z^{\prime }\in D_{i}^{+}. 
\notag
\end{eqnarray}%
So, by \emph{Rademacher's Theorem,} $\psi _{\sigma }^{i}$ is $\mathcal{L}^{2}
-$a.e. differentiable on $D_{i}^{+}$ with $\mathcal{L}^{2}$-measurable
gradient, and the inequality%
\begin{equation*}
\left\vert \nabla \psi _{\sigma }^{i}\left( x^{\prime }\right) \right\vert
\leq M:=L\sqrt{1+L_{\Gamma }^{2}}  \label{bound_grad_psi}
\end{equation*}%
holds for a.e. $x^{\prime }\in D_{i}^{+}$. Notice that the inverse mapping $%
\left( \psi _{\sigma }^{i}\right) ^{-1}$ is well defined on $G_{\sigma
}^{i}:=\psi _{\sigma }^{i}\left( D_{i}^{+}\right) \subset D_{j\left( \sigma
,i\right) }^{-}$ by the formula similar to (\ref{def_psi}), namely,%
\begin{equation*}
\sigma ^{-1}\left( y^{\prime },f_{j}^{-}\left( y^{\prime }\right) \right)
=\left( \left( \psi _{\sigma }^{i}\right) ^{-1}\left( y^{\prime }\right)
,f_{i}^{+}\left( \left( \psi _{\sigma }^{i}\right) ^{-1}\left( y^{\prime
}\right) \right) \right) ,y^{\prime }\in G_{\sigma }^{i}.
\end{equation*}%
In the same way as (\ref{Lip_psi}) we deduce, from (\ref{Lipschitz}), that $%
\left( \psi _{\sigma }^{i}\right) ^{-1}$ is Lipschitz on the (open) set $%
G_{\sigma }^{i}$ and, the estimate%
\begin{equation}
\left\vert \nabla \left( \psi _{\sigma }^{i}\right) ^{-1}\left( y^{\prime
}\right) \right\vert \leq M  \label{bound_grad_psi_inv}
\end{equation}%
holds for $\mathcal{L}^{2}$-a.e. $y^{\prime }\in G_{\sigma }^{i}$.

Integrating the function $\left\vert \limfunc{Tr}u\left( \sigma \left(
x\right) \right) \right\vert ^{2}$ on the surface piece $\Gamma ^{+}\cap
B\left( x^{i},\delta _{i}\right) $ we pass first to the double integral%
\begin{eqnarray}
&&\underset{\Gamma ^{+}\cap B\left( x^{i},\delta _{i}\right) }{\int }%
\left\vert \limfunc{Tr}u\left( \sigma \left( x\right) \right) \right\vert
^{2}d\mathcal{H}^{2}\left( x\right)   \notag \\
&=&\diint\limits_{D_{i}^{+}}\left\vert \limfunc{Tr}u\left( \sigma \left(
x^{\prime },f_{i}^{+}\left( x^{\prime }\right) \right) \right) \right\vert
^{2}\sqrt{1+\left\vert \nabla f_{i}^{+}\left( x^{\prime }\right) \right\vert
^{2}}dx^{\prime }  \label{int_change_var} \\
&\leq &\sqrt{1+L_{\Gamma }^{2}}\diint\limits_{D_{i}^{+}}\left\vert \limfunc{%
Tr}u\left( \sigma \left( x^{\prime },f_{i}^{+}\left( x^{\prime }\right)
\right) \right) \right\vert ^{2}\,dx^{\prime }.  \notag
\end{eqnarray}%
Due to the representation (\ref{def_psi}) we can make the change of
variables $y^{\prime }=\psi _{\sigma }^{i}\left( x^{\prime }\right) $, $%
x^{\prime }\in D_{i}^{+}$, in the integral (\ref{int_change_var}), and
returning then to the surface integral on a piece of $\Gamma ^{-}$, we have%
\begin{eqnarray}
&&\diint\limits_{D_{i}^{+}}\left\vert \limfunc{Tr}u\left( \sigma \left(
x^{\prime },f_{i}^{+}\left( x^{\prime }\right) \right) \right) \right\vert
^{2}\,dx^{\prime }  \notag \\
&=&\diint\limits_{G_{\sigma }^{i}}\left\vert \limfunc{Tr}u\left( y^{\prime
},f_{j}^{-}\left( y^{\prime }\right) \right) \right\vert ^{2}\left\vert \det
\nabla \left( \psi _{\sigma }^{i}\right) ^{-1}\left( y^{\prime }\right)
\right\vert \,dy^{\prime }  \notag \\
&\leq &6M^{3}\diint\limits_{G_{\sigma }^{i}}\left\vert \limfunc{Tr}u\left(
y^{\prime },f_{j}^{-}\left( y^{\prime }\right) \right) \right\vert ^{2}\sqrt{%
1+\left\vert \nabla f_{j}^{-}\left( y^{\prime }\right) \right\vert ^{2}}%
dy^{\prime }  \label{change_var1} \\
&\leq &6M^{3}\underset{\Gamma ^{-}}{\int }\left\vert \limfunc{Tr}u\left(
y\right) \right\vert ^{2}d\mathcal{H}^{2}\left( y\right) .  \notag
\end{eqnarray}%
Here we used the estimate (\ref{bound_grad_psi_inv}) and the obvious
inequality $\left\vert \det A\right\vert \leq 6\left\Vert A\right\Vert ^{3}$
($A$ is an arbitrary $3\times 3$-matrix). Since the sets $\Gamma ^{+}\cap
B\left( x^{i},\delta _{i}\right) $, $i=1,\dots ,r$, cover the surface $%
\Gamma ^{+}$, taking into account (\ref{int_change_var}) and (\ref%
{change_var1}) we conclude that%
\begin{eqnarray*}
\int_{\Gamma ^{+}}\left\vert \limfunc{Tr}u\left( \sigma \left( x\right)
\right) \right\vert ^{2}d\mathcal{H}^{2}\left( x\right)  &\leq &\underset{i=1%
}{\overset{r}{\dsum }}\underset{\Gamma ^{+}\cap B\left( x^{i},\delta
_{i}\right) }{\int }\left\vert \limfunc{Tr}u\left( \sigma \left( x\right)
\right) \right\vert ^{2}d\mathcal{H}^{2}\left( x\right)  \\
&\leq &\mathfrak{L}\underset{\Gamma ^{-}}{\int }\left\vert \limfunc{Tr}%
u\left( y\right) \right\vert ^{2}d\mathcal{H}^{2}\left( y\right) 
\end{eqnarray*}%
where $\mathfrak{L}:=6rM^{3}\sqrt{1+L_{\Gamma }^{2}}>0$ depends just on the
Lipschitz constant $L\geq 1$ and on the properties of the domain $\Omega $
(namely, of its boundary).\bigskip 
\end{proof}

\medskip

Proving the existence theorem we pay the main attention to the validity of
the boundary condition (C$_{3}$) where Lemma \ref{estimate_w} is crucial.

\begin{theorem}
\label{Th_exist} Let $W:\mathbb{R}^{3\times 3}\rightarrow \mathbb{R\cup }%
\left\{ +\infty \right\} $ be a polyconvex function satisfying the growth
assumption (\ref{growth_cond}). Then problem (\ref{varproblem}) admits a
minimizer whenever there exists at least one pair $\omega :=\left( u,\sigma
\right) \in \mathcal{W}_{L}$ with%
\begin{equation*}
I\left( \omega \right) :=\dint_{\Omega }W\left( \nabla u\left( x\right)
\right) \,\,dx<+\infty .
\end{equation*}
\end{theorem}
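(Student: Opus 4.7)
The plan is to apply the direct method. Take any minimizing sequence $\omega_n=(u_n,\sigma_n)\in\mathcal{W}_L$ with $I(\omega_n)\to\inf_{\mathcal{W}_L}I<+\infty$. By the growth condition \eqref{growth_cond}, the sequences $\{\nabla u_n\}$, $\{\mathrm{Adj}\,\nabla u_n\}$ and $\{\det\nabla u_n\}$ are all bounded in $\mathbf{L}^2(\Omega)$. To upgrade this to a $\mathbf{W}^{1,2}$-bound I would apply the generalized Poincar\'e inequality of Proposition~\ref{poincare} on $\Gamma_1$: the condition (C$_1$) forces $\int_{\Gamma_1}\mathrm{Tr}\,u_n\,d\mathcal{H}^2=\int_{\Gamma_1}x\,d\mathcal{H}^2$ to be a fixed vector, so $\{u_n\}$ is bounded in $\mathbf{L}^2(\Omega;\mathbb{R}^3)$ as well. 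Extract a subsequence (not relabelled) with $u_n\rightharpoonup u$ in $\mathbf{W}^{1,2}(\Omega;\mathbb{R}^3)$.

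For the mappings, every $\sigma_n$ is $L$-Lipschitz from the compact set $\Gamma^+$ to the bounded set $\Gamma^-$, so the Arzel\`a--Ascoli theorem yields a further subsequence converging uniformly on $\Gamma^+$ to some $\sigma:\Gamma^+\to\Gamma^-$. Both bi-Lipschitz inequalities in \eqref{Lipschitz} pass to the uniform limit, so $\sigma\in\Sigma_L(\Gamma^+;\Gamma^-)$.

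Next, lower semicontinuity. Because the minors $\mathrm{Adj}\,\nabla u_n$ and $\det\nabla u_n$ are uniformly bounded in $\mathbf{L}^2$, the classical polyconvexity machinery (Ball's weak continuity of minors, applied exactly as in \cite{B,Dac}) gives $\mathrm{Adj}\,\nabla u_n\rightharpoonup\mathrm{Adj}\,\nabla u$ and $\det\nabla u_n\rightharpoonup\det\nabla u$ weakly in $\mathbf{L}^2$. Since $W=g\circ\mathbb{T}$ with $g$ convex, Tonelli's semicontinuity theorem then gives $I(u,\sigma)\leq\liminf_n I(u_n,\sigma_n)=\inf_{\mathcal{W}_L}I$.

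It remains to verify $(u,\sigma)\in\mathcal{W}_L$, i.e.\ (C$_1$)--(C$_3$). Proposition~\ref{traceconv} yields $\mathrm{Tr}\,u_n\to\mathrm{Tr}\,u$ strongly in $\mathbf{L}^2(\partial\Omega;\mathbb{R}^3)$, so passing to a further a.e.\ subsequence and using continuity of $h$ disposes of (C$_1$) and (C$_2$). The main obstacle, and the place where Lemma~\ref{estimate_w} is indispensable, is (C$_3$): I must pass to the limit in $\mathrm{Tr}\,u_n=\mathrm{Tr}\,u_n\circ\sigma_n$ on $\Gamma^+$ while both the outer function and the inner argument move. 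I would split
\[
\mathrm{Tr}\,u_n\circ\sigma_n-\mathrm{Tr}\,u\circ\sigma
=(\mathrm{Tr}\,u_n-\mathrm{Tr}\,u)\circ\sigma_n
+(\mathrm{Tr}\,u\circ\sigma_n-\mathrm{Tr}\,u\circ\sigma).
\]
The first term tends to $0$ in $\mathbf{L}^2(\Gamma^+)$ by Lemma~\ref{estimate_w} applied to $u_n-u$, combined with $\mathrm{Tr}\,u_n\to\mathrm{Tr}\,u$ in $\mathbf{L}^2(\Gamma^-)$. For the second term I would use density: given $\varepsilon>0$, approximate $\mathrm{Tr}\,u$ in $\mathbf{L}^2(\partial\Omega)$ by a function $v\in\mathbf{C}(\partial\Omega;\mathbb{R}^3)$; then $\|v\circ\sigma_n-v\circ\sigma\|_{\mathbf{L}^2(\Gamma^+)}\to 0$ by uniform continuity of $v$ and uniform convergence $\sigma_n\to\sigma$, while the two remaining error terms are controlled by $\sqrt{\mathfrak{L}_L}\,\varepsilon$ again via Lemma~\ref{estimate_w}. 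Since also $\mathrm{Tr}\,u_n\to\mathrm{Tr}\,u$ in $\mathbf{L}^2(\Gamma^+)$, the identity (C$_3$) passes to the limit in $\mathbf{L}^2(\Gamma^+)$ and hence holds $\mathcal{H}^2$-a.e.\ on $\Gamma^+$. Thus $(u,\sigma)$ is a minimizer.
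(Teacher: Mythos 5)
Your proposal is correct and follows essentially the same route as the paper: coercivity plus the generalized Poincar\'e inequality and (C$_1$) give the $\mathbf{W}^{1,2}$ bound, Ascoli handles $\sigma_n$, weak continuity of minors plus convexity of $g$ gives lower semicontinuity, and (C$_3$) is recovered by exactly the paper's three-piece estimate (constraint plus trace convergence, Lemma~\ref{estimate_w} for the middle term, density plus uniform continuity and uniform convergence of $\sigma_n$ for the last). The only cosmetic difference is that you approximate $\limfunc{Tr}u$ directly by a continuous function on $\partial\Omega$, whereas Lemma~\ref{estimate_w} as stated applies to traces of $\mathbf{W}^{1,2}$ functions, so (as the paper does) take $v$ to be the restriction to $\partial\Omega$ of a continuous function approximating $u$ in $\mathbf{W}^{1,2}(\Omega;\mathbb{R}^{3})$.
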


\begin{proof}
Let us consider a minimizing sequence $\left\{ \left( u_{n},\sigma
_{n}\right) \right\} \subset \mathcal{W}_{L}$ of the functional (\ref%
{Functional}), e.g., such as%
\begin{equation}
\dint_{\Omega }W\left( \nabla u_{n}\left( x\right) \right) \,dx\leq \inf
\left\{ \dint_{\Omega }W\left( \nabla u\left( x\right) \right) \,dx:\left(
u,\sigma \right) \in \mathcal{W}_{L}\right\} +\frac{1}{n}<+\infty .
\label{min_seq}
\end{equation}%
Taking into account the estimate (\ref{growth_cond}) we deduce from (\ref%
{min_seq}) that the sequences $\left\{ \nabla u_{n}\right\} $, $\left\{ 
\mathrm{Adj\,}\nabla u_{n}\right\} $ and $\left\{ \det \,\nabla
u_{n}\right\} $ are bounded in $\mathbf{L}^{2}\left( \Omega ;\mathbb{R}%
^{3\times 3}\right) $ and in $\mathbf{L}^{2}\left( \Omega ;\mathbb{R}\right) 
$, respectively. Applying Proposition \ref{poincare} and the boundary
condition (C$_{1}$) we find a constant $C>0$ such that the inequality%
\begin{equation*}
\underset{\Omega }{\dint }\left\vert u_{n}\left( x\right) \right\vert
^{2}dx\leq C\left[ \underset{\Omega }{\dint }\left\vert \nabla u_{n}\left(
x\right) \right\vert ^{2}\,dx+\left\vert \dint\limits_{\Gamma _{1}}x\,d%
\mathcal{H}^{2}\left( x\right) \right\vert \right] 
\end{equation*}%
holds for each  $n\geq 1$. So, the sequence $\left\{ u_{n}\right\}$ is bounded in $\mathbf{W}^{1,2}\left( \Omega ;\mathbb{R}%
^{3}\right) $ and by the \emph{Banach-Alaoglu theorem}, up to a subsequence,
converges weakly to some function $\bar{u}\in \mathbf{W}^{1,2}\left( \Omega ;%
\mathbb{R}^{3}\right) $. Without loss of generality, we can also assume that 
$\left\{ \mathrm{Adj\,}\nabla u_{n}\right\} $ and $\left\{ \mathrm{\det \,}%
\nabla u_{n}\right\} $ converge weakly to some functions $\xi \in \mathbf{L}%
^{2}\left( \Omega ;\mathbb{R}^{3\times 3}\right) $ and $\eta \in \mathbf{L}%
^{2}\left( \Omega ;\mathbb{R}\right) $, respectively. Now, by Theorem 8.20 
\cite[pp. 395-396]{Dac} due to the uniqueness of the limit we deduce that $%
\xi \left( x\right) =\mathrm{Adj\,}\nabla u\left( x\right) $ and $\eta
\left( x\right) =\mathrm{\det \,}\nabla u\left( x\right) $ for almost all $%
x\in \Omega $. Thus we have the weak convergence\ of the sequence $\left\{ 
\mathbb{T}\left( \nabla u_{n}\right) \right\} $ to the vector-function $%
\mathbb{T}\left( \nabla u\right) $ in the space $\mathbf{L}^{2}\left( \Omega
;\mathbb{R}^{\tau \left( 3,3\right) }\right) $.

On the other hand, recalling that $\left\{ \sigma _{n}\right\} \subset
\Sigma _{L}\left( \Gamma ^{+};\Gamma ^{-}\right) $ (see (\ref{Lipschitz}))
by \emph{Ascoli's theorem} up to a subsequence, not relabeled, $\left\{
\sigma _{n}\right\} $ converges uniformly to $\overline{\sigma }\in \Sigma
_{L}\left( \Gamma ^{+};\Gamma ^{-}\right) $.

Since the integrand $W$ is polyconvex, it can be represented as $W\left( \xi
\right) =g\left( \mathbb{T}\left( \xi \right) \right) $, $\xi \in \mathbb{R}%
^{3\times 3}$, with some convex function $g:\mathbb{R}^{\tau \left(
3,3\right) }\rightarrow \mathbb{R}$, and, therefore,%
\begin{eqnarray*}
\underset{\Omega }{\dint }W\left( \nabla \overline{u}\left( x\right) \right)
\,\,dx &=&\underset{\Omega }{\dint }g\left( \mathbb{T}\left( \nabla 
\overline{u}\left( x\right) \right) \right) \,\,dx\leq \underset{%
n\rightarrow \infty }{\lim \inf }\,\underset{\Omega }{\dint }g\left( \mathbb{%
T}\left( \nabla u_{n}\left( x\right) \right) \right) \,\,dx \\
&\leq &\inf \left\{ \underset{\Omega }{\dint }W\left( \nabla u\left(
x\right) \right) \,\,dx:\left( u,\sigma \right) \in \mathcal{W}_{L}\right\} 
.
\end{eqnarray*}%
Thus, it remains just to prove that $\bar{\omega}:=\left( \overline{u},%
\overline{\sigma }\right) \in \mathcal{W}_{L}$ (i.e., that the Sobolev
function $\overline{u}$ satisfies the boundary conditions (C$_{1}$)$-$(C$_{3}
$) above with the transformation $\overline{\sigma }$). The validity of (C$%
_{1}$) and (C$_{2}$) follows immediately from Proposition \ref{traceconv}.
In fact, the weak convergence of $\left\{ u_{n}\right\} $ in $\mathbf{W}%
^{1,2}\left( \Omega ;\mathbb{R}^{3}\right) $ implies the strong convergence
of traces $\left\{ \limfunc{Tr}u_{n}\right\} $ in $\mathbf{L}^{2}\left(
\partial \Omega ;\mathbb{R}^{3}\right) $. So, up to a subsequence, $\limfunc{%
Tr}u_{n}\left( x\right) \rightarrow \limfunc{Tr}\bar{u}\left( x\right) $ for 
$\mathcal{H}^{2}$-a.e. $x\in \partial \Omega $. In particular, $\limfunc{Tr}%
\bar{u}\left( x\right) =x$ and $h\left( \limfunc{Tr}\bar{u}\left( x\right)
\right) =0$ almost everywhere on $\Gamma _{1}$ and on $\Gamma _{2}$,
respectively (w.r.t. the Hausdorff measure).

In order to verify the condition (C$_{3}$) we observe first that%
\begin{equation}
\limfunc{Tr}{u}_{n}(x) =\limfunc{Tr}{u}_{n}({
\sigma}_{n}(x)) ,n=1,2,\dots ,
\label{cond_C3_n}
\end{equation}%
for $\mathcal{H}^{2}$-a.e. $x\in \Gamma ^{+}$ and consider the surface
integral%
\begin{equation*}
\mathcal{J}:=\int_{\Gamma ^{+}}\left\vert \limfunc{Tr}\bar{u}\left( x\right)
-\limfunc{Tr}\bar{u}\left( \bar{\sigma}\left( x\right) \right) \right\vert
^{2}d\mathcal{H}^{2}\left( x\right) .
\end{equation*}%
By the Minkowski's inequality we have%
\begin{equation}
\mathcal{J}^{1/2}\leq \left( \mathcal{J}_{1}^{n}\right) ^{1/2}+\left( 
\mathcal{J}_{2}^{n}\right) ^{1/2}+\left( \mathcal{J}_{3}^{n}\right) ^{1/2}
\label{estimate_int}
\end{equation}%
where%
\begin{eqnarray*}
&&\mathcal{J}_{1}^{n}:=\int_{\Gamma ^{+}}\left\vert \limfunc{Tr}\bar{u}%
\left( x\right) -\limfunc{Tr}u_{n}\left( \sigma _{n}\left( x\right) \right)
\right\vert ^{2}d\mathcal{H}^{2}\left( x\right)  {;} \\
&&\mathcal{J}_{2}^{n}:=\int_{\Gamma ^{+}}\left\vert \limfunc{Tr}u_{n}\left(
\sigma _{n}\left( x\right) \right) -\limfunc{Tr}\bar{u}\left( \sigma
_{n}\left( x\right) \right) \right\vert ^{2}d\mathcal{H}^{2}\left( x\right) 
 {;} \\
&&\mathcal{J}_{3}^{n}:=\int_{\Gamma ^{+}}\left\vert \limfunc{Tr}\bar{u}%
\left( \sigma _{n}\left( x\right) \right) -\limfunc{Tr}\bar{u}\left( \bar{%
\sigma}\left( x\right) \right) \right\vert ^{2}d\mathcal{H}^{2}\left(
x\right) .
\end{eqnarray*}

Taking into account the equalities (\ref{cond_C3_n}) by using Proposition %
\ref{traceconv} we immediately obtain that $\mathcal{J}_{1}^{n}\rightarrow 0$
as $n\rightarrow \infty $.

Due to the linearity of the trace operator, applying Lemma \ref{estimate_w}
and again Proposition \ref{traceconv} we arrive at%
\begin{equation*}
\mathcal{J}_{2}^{n}\leq \mathfrak{L}_L\int_{\Gamma ^{-}}\left\vert \limfunc{Tr}%
\left( u_{n}-\bar{u}\right) \left( x\right) \right\vert ^{2}d\mathcal{H}%
^{2}\left( x\right) \rightarrow 0 { \ \ as \ \ }n\rightarrow \infty 
.
\end{equation*}

Let us approximate now $\bar{u}\in \mathbf{W}^{1,2}\left( \Omega ;\mathbb{R}%
^{3}\right) $ by a sequence of continuous functions $v_{k}:\overline{\Omega }%
\rightarrow \mathbb{R}^{3}$, $k=1,2,\dots $ (with respect to the norm of $%
\mathbf{W}^{1,2}\left( \Omega ;\mathbb{R}^{3}\right) $). Then (see
Proposition \ref{traceconv}) $v_{k}=\limfunc{Tr}v_{k}\rightarrow \limfunc{Tr}%
\bar{u}$ as $k\rightarrow \infty$ in $\mathbf{L}^{2}\left( \partial \Omega
;\mathbb{R}^{3}\right) $. In particular, given $\varepsilon >0$ there exists
an index $k^{\ast }\geq 1$ such that%
\begin{equation*}
\int_{\Gamma ^{+}}\left\vert \limfunc{Tr}\bar{u}\left( y\right) -v_{k^{\ast
}}\left( y\right) \right\vert ^{2}d\mathcal{H}^{2}\left( y\right) \leq
\varepsilon .
\end{equation*}%
By using Lemma \ref{estimate_w} similarly as was done to estimate the
integral $\mathcal{J}_{2}^{n}$ we have%
\begin{eqnarray}
&&\int_{\Gamma ^{+}}\left\vert \limfunc{Tr}\bar{u}\left( \sigma _{n}\left(
x\right) \right) -v_{k^{\ast }}\left( \sigma _{n}\left( x\right) \right)
\right\vert ^{2}d\mathcal{H}^{2}\left( x\right)   \notag \\
&\leq &\mathfrak{L}_{L}\int_{\Gamma ^{-}}\left\vert \limfunc{Tr}\bar{u}\left(
y\right) -v_{k^{\ast }}\left( y\right) \right\vert ^{2}d\mathcal{H}%
^{2}\left( y\right) \leq \mathfrak{L}_{L}\varepsilon ,\;\; n=1,2,\dots 
,  \label{Int_3_1}
\end{eqnarray}%
and similarly
\begin{equation}
\int_{\Gamma ^{+}}\left\vert \limfunc{Tr}\bar{u}\left( \bar{\sigma}\left(
x\right) \right) -v_{k^{\ast }}\left( \bar{\sigma}\left( x\right) \right)
\right\vert ^{2}d\mathcal{H}^{2}\left( x\right) \leq \mathfrak{L}_{L}\varepsilon 
.  \label{Int_3_2}
\end{equation}%
On the other hand, by the uniform continuity of $v_{k^{\ast }}$ and the uniform
convergence $\sigma _{n}\rightarrow \overline{\sigma }$ as $n\rightarrow
\infty $, we find a number $n^{\ast }\geq 1$ such that%
\begin{equation*}
\left\vert v_{k^{\ast }}\left( \sigma _{n}\left( x\right) \right)
-v_{k^{\ast }}\left( \bar{\sigma}\left( x\right) \right) \right\vert \leq
\varepsilon 
\end{equation*}%
for all $n\geq n^{\ast }$ and all $x\in \Gamma ^{+}$, and, consequently,%
\begin{equation}
\int_{\Gamma ^{+}}\left\vert v_{k^{\ast }}\left( \sigma _{n}\left( x\right)
\right) -v_{k^{\ast }}\left( \bar{\sigma}\left( x\right) \right) \right\vert
^{2}d\mathcal{H}^{2}\left( x\right) \leq \mathcal{H}^{2}\left( \Gamma
^{+}\right) \varepsilon ^{2},n\geq n^{\ast }.  \label{Int_3_3}
\end{equation}%
Joining together the inequalities (\ref{Int_3_1}), (\ref{Int_3_2}) and (\ref%
{Int_3_3}) we obtain that%
\begin{equation*}
\left( \mathcal{J}_{3}^{n}\right) ^{1/2}\leq \left( \mathfrak{L}_L\varepsilon
\right) ^{1/2}+\left( \mathfrak{L}_L\varepsilon \right) ^{1/2}+\left( \mathcal{%
H}^{2}\left( \Gamma ^{+}\right) \varepsilon ^{2}\right) ^{1/2},n\geq
n^{\ast }.
\end{equation*}%
Since $\varepsilon >0$ is arbitrary and the constant $\mathfrak{L}_L$ does not
depend on $n=1,2,\dots $, we conclude that all the three integrals in the
right-hand side of (\ref{estimate_int}) tend to zero as $n\rightarrow \infty 
$. Thus $\mathcal{J}=0$, or, in other words, $\limfunc{Tr}\bar{u}\left(
x\right) -\limfunc{Tr}\bar{u}\left( \bar{\sigma}\left( x\right) \right) =0$
for $\mathcal{H}^{2}$-a.e. $x\in \Gamma ^{+}$, and the theorem is proved.
\end{proof}

\section{Necessary conditions of optimality}

In this section, under some additional hypotheses, we deduce necessary conditions of optimality for problem (\ref{varproblem}).

To simplify, assume that the function $W$ is twice continuously differentiable  and $h$ is continuously differentiable.
Moreover, suppose that the surfaces $\Gamma_1, \Gamma_2, \Gamma^+, \Gamma^-, \Gamma_4$ are sufficiently smooth. 

Given $\Gamma \subset \partial\Omega,$ with $\mathcal{H}^{2}(\Gamma)>0,$ in what follows we denote by $\mathbf{C}^{1}(\Gamma^+;\mathbb{R}^{3})$ the family of restrictions to $\Gamma$ of all functions $u:\Omega\rightarrow \mathbb{R}^{3},$ whose gradients are continuous up to the boundary. Let us supply $\mathbf{C}^{1}(\Omega;\mathbb{R}^{3})$ with the natural sup-norm. 

We consider the problem (\ref{varproblem}) defined in the space $\mathbf{C}^{1}(\Omega;\mathbb{R}^{3})\times \mathbf{C}^{1}(\Gamma^+;\mathbb{R}^{3})$.

\begin{theorem}
Let $(\bar{u},\bar{\sigma})\in \mathbf{C}^{1}({\Omega};\mathbb{R}^{3})\times \mathbf{C}^{1}({\Gamma}^+;\mathbb{R}^{3})$ be a minimizer of problem (\ref{varproblem}). Assume that $\nabla h(\bar{u}(x))\neq 0$, $x\in \Gamma_2$ and ${\rm det}\nabla\bar{u}(\bar{\sigma}(x))\neq 0$, $x\in\Omega$. Then the following conditions are satisfied:
\begin{eqnarray}
&& {\rm Div}(\nabla W)(\nabla\bar{u}(x))=0,\;\; x\in\Omega; \label{f1}\\
\smallskip
&& \nabla W(\nabla\bar{u}(x))\nu(x)=0,\;\; x\in \Gamma_3; \label{f2}\\
\smallskip
&& \nabla W(\nabla\bar{u}(x))\nu(x)\times\nabla h(\bar{u}(x))=0,\;\; x\in \Gamma_2; \label{f3}\\
\smallskip
&&\nabla W(\nabla\bar{u}(x))\nu(x)=0,\;\; x\in \Gamma^{\pm}. \label{f4}
\end{eqnarray}
\end{theorem}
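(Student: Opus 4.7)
The natural approach is the classical first-variation technique applied under the constraints (C$_{1}$)--(C$_{3}$). Since $(\bar{u},\bar{\sigma})$ is a minimizer and the integrand depends only on $\nabla u$, for every admissible smooth variation $(\bar{u}+t\varphi,\bar{\sigma}+t\psi)$ with $t$ near zero one must have
$$
0=\frac{d}{dt}\bigg|_{t=0}I(\bar{u}+t\varphi)=\int_{\Omega}\nabla W(\nabla\bar{u}):\nabla\varphi\,dx.
$$
Exploiting the $\mathbf{C}^{2}$-regularity of $W$ and the $\mathbf{C}^{1}$-regularity of $\bar{u}$, which ensures that $\mathrm{Div}\bigl(\nabla W(\nabla\bar{u})\bigr)$ is continuous up to $\partial\Omega$, an integration by parts gives
$$
-\int_{\Omega}\mathrm{Div}\bigl(\nabla W(\nabla\bar{u})\bigr)\cdot\varphi\,dx+\int_{\partial\Omega}\bigl(\nabla W(\nabla\bar{u})\nu\bigr)\cdot\varphi\,d\mathcal{H}^{2}=0,
$$
and each of (f1)--(f4) is obtained by specializing $\varphi$ (and possibly $\psi$) to localize near one piece of $\partial\Omega$.

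First I would take $\varphi\in\mathbf{C}_{c}^{\infty}(\Omega;\mathbb{R}^{3})$, which is trivially admissible because it vanishes on $\partial\Omega$; the surface term drops out and the fundamental lemma of the calculus of variations yields (f1). Once (f1) is known, the interior integral disappears and only the boundary term survives. For (f2) I localize $\varphi$ in a neighbourhood of a point of $\Gamma_{3}$, vanishing near the other pieces; since $\Gamma_{3}$ carries no boundary condition, $\varphi|_{\Gamma_{3}}$ is arbitrary and localization gives $\nabla W(\nabla\bar{u})\nu=0$ on $\Gamma_{3}$. For (f3) I localize $\varphi$ near $\Gamma_{2}$; linearizing (C$_{2}$) at $t=0$ forces $\varphi(x)\cdot\nabla h(\bar{u}(x))=0$ on $\Gamma_{2}$, so the surface integral vanishes for every $\varphi$ tangent to $\mathcal{S}$ at $\bar{u}(x)$, whence $\nabla W(\nabla\bar{u})\nu$ must be parallel to $\nabla h(\bar{u})$, which is exactly the cross-product condition (f3).

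The main obstacle is (f4), since the knitting condition couples $\Gamma^{+}$ and $\Gamma^{-}$ through $\bar{\sigma}$, so I cannot keep $\sigma$ fixed and must vary $u$ and $\sigma$ simultaneously. Differentiating (C$_{3}$) at $t=0$ produces the linearized knitting identity
$$
\varphi(x)-\varphi(\bar{\sigma}(x))=\nabla\bar{u}(\bar{\sigma}(x))\,\psi(x),\qquad x\in\Gamma^{+},
$$
together with the side condition that $\psi(x)$ be tangent to $\Gamma^{-}$ at $\bar{\sigma}(x)$ so that $\bar{\sigma}+t\psi$ still takes values in $\Gamma^{-}$. The hypothesis $\det\nabla\bar{u}(\bar{\sigma}(x))\neq 0$ lets me invert $\nabla\bar{u}(\bar{\sigma}(x))$ and solve for $\psi$ in terms of the values of $\varphi$ on the two sides, thus parametrizing the admissible variations. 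Pulling the integral on $\Gamma^{-}$ back to $\Gamma^{+}$ via $\bar{\sigma}$ (with the Lipschitz Jacobian produced in the proof of Lemma~\ref{estimate_w}) collects all surface contributions into a single integral over $\Gamma^{+}$ that is bilinear in $\varphi(x),\varphi(\bar{\sigma}(x))$ and $\psi(x)$. The delicate final step is to exhibit enough independent admissible choices---first matching $\varphi(x)=\varphi(\bar{\sigma}(x))$ to extract the traction-balance relation on the matched pair, then using the extra freedom provided by the invertibility of $\nabla\bar{u}$ to peel off the two tractions separately---so that the du~Bois--Reymond lemma forces $\nabla W(\nabla\bar{u})\nu$ to vanish on both $\Gamma^{+}$ and $\Gamma^{-}$; this is (f4) and is the crux of the argument.
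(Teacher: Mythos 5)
Your treatment of (\ref{f1})--(\ref{f3}) is essentially a primal version of the paper's argument: the paper writes the constraints as $F(u,\sigma)=0$, checks that $DF(\bar u,\bar\sigma)$ is surjective (this is exactly where the hypotheses $\nabla h(\bar u)\neq 0$ and $\det\nabla\bar u(\bar\sigma)\neq 0$ enter) and invokes the Lagrange multiplier rule, whereas you vary along directions satisfying the linearized constraints. That route can be made rigorous, but note that $\bar u+t\varphi$ does not satisfy the nonlinear constraints (C$_2$), (C$_3$) exactly, so the variations must be corrected (Lyusternik/implicit function theorem), which needs precisely the surjectivity the paper verifies; as written you pass over this point. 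The conclusions you reach on $\Omega$, $\Gamma_3$ and $\Gamma_2$ nevertheless coincide with (\ref{f1}), (\ref{f2}), (\ref{f3}).

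The genuine gap is in (\ref{f4}). You require $\psi(x)$ to be tangent to $\Gamma^-$ at $\bar\sigma(x)$ and then solve $\psi(x)=\nabla\bar u(\bar\sigma(x))^{-1}\bigl(\varphi(x)-\varphi(\bar\sigma(x))\bigr)$; but then the tangency of $\psi$ turns into a pointwise scalar constraint linking $\varphi|_{\Gamma^+}$ and $\varphi\circ\bar\sigma$, namely $\bigl\langle \bigl(\nabla\bar u(\bar\sigma(x))\bigr)^{-T}\nu(\bar\sigma(x)),\,\varphi(x)-\varphi(\bar\sigma(x))\bigr\rangle=0$. Within this admissible class the first variation yields only that, at matched points, the two tractions $\nabla W(\nabla\bar u)\nu$ are parallel to the single direction $\bigl(\nabla\bar u(\bar\sigma(x))\bigr)^{-T}\nu(\bar\sigma(x))$ and balance each other with the surface Jacobian weight --- a transmission condition, not (\ref{f4}). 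The ``extra freedom'' you invoke to peel the two tractions apart has already been spent in solving for $\psi$, and no further independent variations remain. The paper obtains (\ref{f4}) precisely because it poses the smooth problem on $\mathbf{C}^1(\Omega;\mathbb{R}^3)\times\mathbf{C}^1(\Gamma^+;\mathbb{R}^3)$, so arbitrary (including normal) variations $\tilde\sigma$ are allowed: taking $\tilde u=0$ in (\ref{f7}) and using $\det\nabla\bar u(\bar\sigma)\neq 0$ forces the multiplier $\lambda^+$ to vanish, after which free and independent variation of $\tilde u$ on $\Gamma^+\cup\Gamma^-$ gives (\ref{f4}). If you insist on the tangency restriction on $\psi$, the conclusion (\ref{f4}) is not reachable by your argument and would have to be replaced by the balance relation above.
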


\begin{proof} Let us write the constraints in the minimization problem (\ref{varproblem}) as $F(u,\sigma)=0$ where the map  
$
F:\mathbf{C}^{1}({\Omega};\mathbb{R}^{3})\times \mathbf{C}^{1}({\Gamma}^+;\mathbb{R}^{3})\rightarrow \mathbf{C}^{1}({\Gamma}_1;\mathbb{R}^{3})\times \mathbf{C}^{1}({\Gamma}_2;\mathbb{R})\times \mathbf{C}^{1}({\Gamma}^+;\mathbb{R}^{3})$ is given by
$$
F(u,\sigma ):=(u(x)-x,h(u(x)),u(x)-u(\sigma(x))).
$$
Under our assumptions the map $F$ and the functional $I$ are both Fr\'{e}chet differentiable. In particular, for the (Fr\'{e}chet) derivative of $F$ at the point $(\bar{u},\bar{\sigma})$ we have
$$
DF(\bar{u},\bar{\sigma})(\tilde{u},\tilde{\sigma})(x)=
\left(
\begin{array}{c}
\tilde{u}(x)\\
\langle\nabla h(\bar{u}(x)),\tilde{u}(x)\rangle\\
\tilde{u}(x)-\tilde{u}(\bar{\sigma}(x))-\nabla \bar{u}(\bar{\sigma}(x))\tilde{\sigma}(x)
\end{array}
\right).
$$
Here, and in what follows, $\langle\cdot,\cdot\rangle$ denotes the inner product in $\mathbb{R}^{3}.$ Taking into account that $\nabla h(\bar{u}(x))\neq0$ on $\Gamma_2$ and that the jacobian matrix $\nabla\bar{u}(\bar{\sigma}(x))$ is not degenerated, we have that the linear operator $DF(\bar{u},\bar{\sigma})$ is onto the space $\mathbf{C}^{1}({\Gamma}_1;\mathbb{R}^{3})\times \mathbf{C}^{1}(\Gamma_2;\mathbb{R})\times \mathbf{C}^{1}(\Gamma^+;\mathbb{R}^{3}).$
By the Lagrange multipliers rule (see, e.g., \cite{IT}) there exist linear continuous functionals $\lambda_1,\;\lambda_2,\;\lambda^+$ on $\mathbf{C}^{1}(\Gamma_1;\mathbb{R}^{3})$,\; $\mathbf{C}^{1}(\Gamma_2;\mathbb{R})$ and $\mathbf{C}^{1}(\Gamma^+;\mathbb{R}^{3}),$ respectively, such that
\begin{equation*}
\int_{\Omega}\sum_{i,j=1}^{3}\frac{\partial W(\nabla\bar{u}(x))} {\partial\xi_{ij}}\frac{\partial\tilde{u}_i(x)}{\partial x_j}dx
+\lambda_1(\tilde{u})+\lambda_2(\langle\nabla h(\bar{u}),\tilde{u}\rangle)
+\lambda^+(\tilde{u}-\tilde{u}(\bar{\sigma})-\nabla \bar{u}(\bar{\sigma})\tilde{\sigma})=0.
\end{equation*}

\noindent Applying the Divergence theorem we get
\begin{eqnarray}
&&-\int_{\Omega}\left\langle {\rm Div}{(\nabla W(\nabla\bar{u}(x)))},\tilde{u}(x)\right\rangle dx
+\int_{\partial\Omega}\left\langle{\nabla W(\nabla\bar{u}(x))\nu(x),\tilde{u}(x)}\right\rangle d\mathcal{H}^{2}(x) \nonumber \\
&&+\lambda_1(\tilde{u})+\lambda_2(\langle\nabla h(\bar{u}),\tilde{u}\rangle)+\lambda^+(\tilde{u}-\tilde{u}(\bar{\sigma})-\nabla \bar{u}(\bar{\sigma})\tilde{\sigma}))=0. \label{f5} 
\end{eqnarray}
Here $\nu(x)$ is the unit outer normal to the boundary. Varying $\tilde{u}$ in (\ref{f5}) such that $\tilde{u}(x)=0$ on $\partial{\Omega}$, we obtain  (\ref{f1}). Taking then $\tilde{u} \in \mathbf{C}^{1}(\Omega;\mathbb{R}^{3})$ with $\tilde{u}(x)=0$ on $\partial\Omega\setminus \Gamma_3$ we arrive at (\ref{f2}).
Furthermore, choosing appropriate functions $\tilde{u}$ in (\ref{f5}) we obtain
\begin{equation}
\label{f6}
\int_{\Gamma_2}\left\langle\nabla W(\nabla\bar{u}(x))\nu (x),\tilde{u}(x)\right\rangle d\mathcal{H}^{2}(x)
+\lambda_2(\langle\nabla h(\bar{u}),\tilde{u}\rangle)=0 
\end{equation}
whenever $\tilde{u}(x)=0,\; x\in\partial\Omega\setminus \Gamma_2$, and
\begin{equation}
\label{f7}
\int_{\Gamma^+\cup\Gamma^-}\left\langle{\nabla W(\nabla\bar{u}(x)) }\nu (x),\tilde{u}(x)\right\rangle d\mathcal{H}^{2}(x)
+\lambda^+(\tilde{u}-\tilde{u}(\bar{\sigma})-\nabla \bar{u}(\bar{\sigma})\tilde{\sigma})=0\;\; 
\end{equation}
whenever $\tilde{u}(x)=0,\;\; x\in\partial\Omega\setminus (\Gamma^+\cup\Gamma^-)$.

Denote by $\Gamma_2^0$ the part of $\Gamma_2$ where the vectors $a(x):=\nabla W(\nabla\bar{u}(x)) \nu (x)$ and
$b(x):=\nabla h(\bar{u})(x)$ are co-linear. Taking an arbitrary $c \in \mathbf{C}^{1}(\Gamma_2;\mathbb{R})$ such that $c(x)=0$ and $\nabla c(x)=0$ on $\Gamma_{2}^0$, let us define
$$
\hat{u}(x):=
\left\{
\begin{array}{cl}
\frac{a(x)\langle a(x),b(x)\rangle-b(x)|a(x)|^2}{\langle a(x),b(x)\rangle^2-|a(x)|^2|b(x)|^2}c(x),& x\in\Gamma_2\setminus\Gamma_2^0,\\
\smallskip
0, & x\in\Gamma_2^0. 
\end{array}
\right.
$$
Obviously, $\hat{u} \in \mathbf{C}^{1}(\Gamma_2;\mathbb{R}^3)$, $\langle\hat{u}(x), a(x)\rangle=0$ and $\langle\hat{u}(x),b(x)\rangle=c(x)$ for $x\in \Gamma_2$. From (\ref{f6}) we
get $\lambda_2(c)=\lambda_2(\langle b,\hat{u}\rangle)=0$. Hence, varying $\tilde{u} \in \mathbf{C}^{1}(\Gamma_2;\mathbb{R}^3)$ in (\ref{f6}) in a suitable way (in particular, setting $\hat u(x)=0$ on $\Gamma_2^0$) we get $a(x)=0$ in $\Gamma_2\setminus\Gamma_2^0.$ Thus the equality (\ref{f3}) follows.

Finally, taking $\tilde{u}=0$, from (\ref{f7}) we get $\lambda^+=0$, and, as a consequence, 
$$
\int_{\Gamma^-}\left\langle\nabla W (\nabla\bar{u}(x))\nu (x),\tilde{u}(x)\right\rangle d\mathcal{H}^2(x)
+\int_{\Gamma^+}\left\langle\nabla W (\nabla\bar{u}(x))\nu (x),\tilde{u}(x)\right\rangle d\mathcal{H}^2(x)=0,
$$
which implies (\ref{f4}). 
\end{proof}

\bigskip

\bigskip

\noindent  {\bf Acknowledgements}

\bigskip 

The authors are grateful to Hor\'{a}cio Costa and Augusta Cardoso for fruitful discussion 
of medical aspects of the problem and also to Giovanni Leoni, who kindly communicate the proof of Lemma \ref
{Lemmatrace}.

This research was supported by Funda\c{c}\~{a}o para a Ci\^{e}ncia e
Tecnologia (FCT), Portuguese Operational Programme for Competitiveness
Factors (COMPETE), Portuguese National Strategic Reference Framework (QREN)
and European Regional Development Fund (FEDER) through Project VAPS
(EXPL/MAT-NAN/0606/2013).

\bigskip

\end{document}